\newtheorem{theorem}{Theorem}
\newtheorem{corollary}{Corollary}
\newtheorem{assumption}{Assumption}
\newtheorem{definition}{Definition}
\newtheorem{example}{Example}
\newtheorem{lemma}{Lemma}
\newtheorem{proposition}{Proposition}
\newtheorem{remark}{Remark}
\newcommand{\abs}[1]{\left\lvert#1\right\rvert}
\newcommand{\barpow}[1]{\left\lfloor#1\right\rceil}
\newcommand{\sign}[1]{\mbox{sign}(#1)}
\newcommand{\Real}[1]{\mathrm{Re}(#1)}
\newcommand{\Imag}[1]{\mathrm{Im}(#1)}
\def\Red#1{\textcolor{red}{#1}}
\journal{ArXiv}
\begin{document}

\begin{frontmatter}



\title{On the design of non-autonomous fixed-time controllers with a predefined upper bound of the settling time
\footnote{\Red{This is the preprint version of the accepted Manuscript: D. Gómez-Gutiérrez, “On the design of non-autonomous fixed-time controllers with a predefined upper bound of the settling time”, International Journal of Robust and Nonlinear Control, 2020, ISSN: 1099-1239. DOI. 10.1002/rnc.4976.
Please cite the publisher's version. For the publisher's version and full citation details see:
\url{https://doi.org/10.1002/rnc.4976}.
}}
}

\author[label0,label1]{David~Gómez--Gutiérrez}
\ead{David.Gomez.G@ieee.org}

\address[label0]{Multi-agent autonomous systems lab, Intel Labs, Intel Tecnología de M\'exico, Av. del Bosque 1001, Colonia El Bajío, Zapopan, 45019, Jalisco, M\'exico.}
\address[label1]{Tecnologico de Monterrey, Escuela de Ingenier\'ia y Ciencias, Av. General Ram\'on Corona 2514, Zapopan, 45201, Jalisco, M\'exico.}

\begin{abstract}
This paper aims to introduce a design methodology to stabilize a chain of integrators in a fixed-time with predefined Upper Bound for the Settling-Time (\textit{UBST}). This approach is based on time-varying gains (time-base generator) that become singular as the time approaches the predefined convergence time. We present the conditions such that every nonzero trajectory convergence exactly at the predefined time with feedback laws that are simpler than in previous approaches. Additionally, we present results such that the origin is reached before the singularity occurs, making this approach realizable in practice. A significant contribution, since current results, based on time-varying gains, required the time-varying gain to tend to infinity as the time approaches the prescribed/predefined-time. 

Moreover, our approach guarantees fixed-time convergence with predefined \textit{UBST} even in the presence of bounded disturbances (with known bound) and provides a methodology to re-design an autonomous fixed-time controller where the estimate of the \textit{UBST} for the closed-loop system is very conservative into a less conservative one with predefined \textit{UBST} that can be set arbitrarily tight. We present numerical examples to illustrate the effectiveness of the approach together with comparisons with existing autonomous fixed-time controllers.
\end{abstract}

\begin{keyword}
Predefined-time stabilization, fixed-time control, Predefined-time control, Prescribed-time control.
\end{keyword}

\end{frontmatter}

\section{Introduction}
Recently, toward the design of closed-loop systems satisfying time constraints, there has been increasing interest in the control community on a class of finite-time dynamical systems where there exist an Upper Bound of the Settling Time (\textit{UBST}) which is independent on the initial conditions, such systems are known a fixed-time systems~\cite{Polyakov2012a,Sanchez-Torres2018,Andrieu2008,Basin2019}.

To apply such results in scenarios of time constraints, there has been some effort in deriving controllers defining apriori the \textit{UBST} as a parameter of the system~\cite{Sanchez-Torres2018,Aldana-Lopez2018,aldana2019design}. On the one hand, autonomous controllers have been derived based on Lyapunov analysis~\cite{Polyakov2012a,Aldana-Lopez2018,Sanchez2019,Sanchez2019IJC,Munoz2019,Basin2016Finite-andTechniques}. However, even if non conservative estimates of the \textit{UBST} are obtained in the scalar case~(see e.g. \cite{Sanchez-Torres2018,Aldana-Lopez2018}), the estimate of the \textit{UBST} becomes too conservative in high-order systems (see, e.g. the example in Section~5 of \cite{Basin2016ContinuousRegulators}\footnote{In~\cite{Seeber2019} it was shown that the proof of~\cite[Theorem~1]{Basin2016ContinuousRegulators} had flaws. Since the same arguments of~\cite[Theorem~1]{Basin2016ContinuousRegulators} were used in~\cite[Theorem~2]{Basin2016ContinuousRegulators}, this issue impacts the results on the estimation of the \textit{UBST} of the fixed-time algorithms given in~\cite{Basin2016ContinuousRegulators,Basin2016Finite-andTechniques}. However, recently, Basin et al~\cite{basin2020discussion} provided the arguments to correct such flaws
and showed that the results on the estimation of the \textit{UBST} in~\cite{Basin2016ContinuousRegulators,Basin2016Finite-andTechniques} remain valid. The autonomous controller in~\cite{Basin2016ContinuousRegulators} will be used to illustrate our redesign approach.}). On the other hand, nonautonomous controllers based on a class of time-varying gains known as time-base generators have been derived~\cite{Delfin2016,Song2018,Becerra2018,Song2017}, where the origin is reached exactly at the predefined-time, this feature is referred as prescribed-time~\cite{Song2017,Song2018} or predefined-time~\cite{Becerra2018,Delfin2016}. However, in such methods such as ~\cite{Song2017,Song2018}, to reach the origin, the time-varying gain requires to tend to infinity as the time approaches the prescribed/predefined-time.

To fill these gaps, in this paper, we propose a methodology for the design of stabilizing controllers for a perturbed chain of integrators\footnote{For simplicity, we focus on chains of integrators. However, the results can be straightforwardly extended to a controllable linear system and feedback linearizable nonlinear systems, and further extended to the multivariable case.}. Our approach has the following four properties: 
\begin{enumerate}
    \item  the closed-loop system is fixed-time stable;
    \item  the desired \textit{UBST} is set a priori explicitly, with one parameter;
    \item  the \textit{UBST} can be set arbitrarily tight (i.e., the slack between the predefined and the least \textit{UBST} can be set arbitrarily small);
    \item  the controller is non-autonomous with bounded time-varying gains.
\end{enumerate}

Our methodology consists of redesigning known autonomous controllers by adding time-varying gains, constructed from time-base generators~\cite{Morasso1997}. However, unlike existing methods based on time-base generators\cite{Song2017,Song2018,Pal2020DesignTime}, we provide sufficient conditions such that our time-varying gains remain bounded. Moreover, contrary to~\cite{Becerra2018}, no initial state is explicitly used in the feedback law; and unlike~\cite{Song2017,Becerra2018}, predefined-time convergence is guaranteed even in the presence of external disturbances. 

To illustrate our approach, we show how to redesign the autonomous fixed-time controllers in~\cite{Aldana-Lopez2018} and~\cite{Basin2016ContinuousRegulators} to obtain non-autonomous fixed-time controllers with predefined \textit{UBST}, significantly reducing the over-estimation of the \textit{UBST} while maintaining the time-varying gain bounded\footnote{In the publisher's version, which can be found at \url{https://doi.org/10.1002/rnc.4976}, an example of the redesign of the autonomous controller given in~\cite{Zimenko2018} is also provided.}.

\textbf{Notation:}
$\mathbb{R}$ is the set of real numbers, $\Bar{\mathbb{R}}=\mathbb{R}\cup\{-\infty,+\infty\}$, $\mathbb{R}_+=\{x\in\mathbb{R}\,:\,x\geq0\}$ and $\Bar{\mathbb{R}}_+=\mathbb{R}_+\cup\{+\infty\}$. Finally, $t_0$ denotes the initial time. For a complex number $\lambda$, $\Real{\lambda}$ represents the real part of $\lambda$ and $\Imag{\lambda}$ the imaginary part of $\lambda$. For a time function $\delta(t)$, we write $\delta(t)\equiv0$ to stress that $\delta(t)=0$ for all $t\geq t_0$.

The rest of the manuscript is organized as follows. In Section~\ref{Sec:Prelim}, we present the preliminaries on fixed-time stability and time-scale transformations. In Section~\ref{Sec:MainResult}, we introduce our redesign methodology, which is applied in Section~\ref{Sec:Examples} to redesign a linear controller, and two nonlinear controllers for fixed-time stability (the controller from Aldana-López et al.~\cite{Aldana-Lopez2018} and the controller from Basin et al.~\cite{Basin2016ContinuousRegulators}). Finally, in Section~\ref{Sec:Conclusions}, we present the conclusion and future work.

\section{Preliminaries}
\label{Sec:Prelim}
Consider the system
\begin{equation}\label{eq:sys}
    \dot{x}=-f(x,t)+D\delta(t), \ \forall t\geq t_0, 
\end{equation}
where $x\in\mathbb{R}^n$ is the state of the system, $t\in[t_0,+\infty)$ is time, $D=[0,\ldots,0,1]^T$, and $\delta$ is a disturbance satisfying $|\delta(t)|\leq L$, for a constant $L<+\infty$.

The solutions of~\eqref{eq:sys} are understood in the sense of Filipov~\cite{Cortes2008}. We assume that $f(\cdot,\cdot)$
is such that the origin of~\eqref{eq:sys} is asymptotically stable and, except at the origin, \eqref{eq:sys} has the properties of existence and uniqueness of solutions in forward-time on the interval $[t_0,+\infty)$ (see Proposition~5 in ~\cite{Cortes2008}). 
The set of admissible disturbances, on the interval $[t_0,\hat{t}]$, where $\hat{t}$ is some time satisfying $\hat{t}>t_0$, is denoted by $\mathcal{D}_{[t_0,\hat{t}]}$.
The solution of \eqref{eq:sys} for $t\in [t_0,\hat{t}]$, with disturbance $\delta_{[t_0,\hat{t}]}$ (i.e. the restriction of $\delta(t)$ to $[t_0,\hat{t}]$) and initial condition $x_0$ is denoted by $x(t;x_0,t_0,\delta_{[t_0,\hat{t}]})$, and the initial state is given by $x(t_0;x_0,t_0,\cdot) = x_0$, when $t_0=0$ we simply write $x(t;x_0,\delta_{[t_0,\hat{t}]})$. Moreover, if $\delta(t)\equiv0$ we simply write $x(t;x_0)$.

For simplicity, throughout the paper, we assume that the origin is the unique equilibrium point of the systems under consideration. Thus, without ambiguity, we refer to global stability (in the respective sense) of the origin of the system as the stability of the system. The extension to local stability is straightforward.

\begin{definition}(Settling-time function)
\label{Def:Settling}
The \textit{settling-time function} of system~\eqref{eq:sys} is defined as
$$T(x_0,t_0):=\inf\{\xi\geq t_0:\forall\delta_{[t_0,\infty)}\in\mathcal{D}_{[t_0,\infty)}, \lim_{t\to\xi}x(t;x_0,t_0,\delta_{[t_0,\infty)})=0\}-t_0.$$
\end{definition}

For autonomous systems ($f$ in~\eqref{eq:sys} does not depend on $t$), the settling-time function is independent of $t_0$, in such cases we simply write $T(x_0)$. Notice that, Definition~\ref{Def:Settling} admits $T(x_0,t_0)=+\infty$.

\begin{definition} \label{def:fixed}(Fixed-time stability~\cite{Polyakov2014}) 
System \eqref{eq:sys} is said to be \textit{fixed-time stable} if it is asymptotically stable~\cite{Khalil2002NonlinearSystems} and the settling-time function $T(x_0,t_0)$ is bounded on  $\mathbb{R}^n\times\mathbb{R}_+$, i.e. there exists $T_{\text{max}}\in\mathbb{R}_+\setminus\{0\}$ such that $T(x_0,t_0)\leq T_{\text{max}}$ if $t_0\in\mathbb{R}_+$ and $x_0\in\mathbb{R}^n$. Thus, $T_{\text{max}}$ is a \textit{UBST} of $x(t;x_0,t_0,\delta_{[t_0,\infty)})$.
\end{definition}


\subsection{Time-scale transformations}

As in~\cite{Pico2013,aldana2019design}, the trajectories corresponding to the system solutions are interpreted, in the sense of differential geometry~\cite{Kuhnel2015DifferentialGeometry}, as regular parametrized curves. Since we apply regular parameter transformations over the time variable, then without ambiguity, this reparametrization is sometimes referred to as time-scaling.

\begin{definition} (Definition~2.1 in~\cite{Kuhnel2015DifferentialGeometry})
\label{Def:RegularParamCurve}
A regular parametrized curve, with parameter $t$, is a $C^1(\mathcal{I})$ immersion $c: \mathcal{I}\to \mathbb{R}$, defined on a real interval $\mathcal{I} \subseteq \mathbb{R}$. This means that $\frac{dc}{dt}\neq 0$ holds everywhere.
\end{definition}

\begin{definition}(Pg.~8 in~\cite{Kuhnel2015DifferentialGeometry})
\label{Def:RegularCurve}
A regular curve is an equivalence class of regular parametrized curves, where the equivalence relation is given by regular (orientation preserving) parameter transformations $\varphi$, where $\varphi:~\mathcal{I}~\to~\mathcal{I}'$ is $C^1(\mathcal{I})$, bijective and $\frac{d\varphi}{dt}>0$. Therefore, if $c:\mathcal{I}\to\mathbb{R}$ is a regular parametrized curve and $\varphi:\mathcal{I}\to \mathcal{I}'$ is a regular parameter transformation, then $c$  and  $c\circ\varphi:\mathcal{I}'\to\mathbb{R}$ are considered to be equivalent.
\end{definition}

\section{Main Result}
\label{Sec:MainResult}

Consider the perturbed systems
\begin{align}
 \dot{x}_1&=x_2\\
 &\vdots\\
 \dot{x}_{n-1}&=x_n\\
 \dot{x}_n&=u(t)+\delta(t), \label{Eq:PredefinedSystem}
\end{align}
where $x=[x_1,\ldots,x_n]^T\in\mathbb{R}^n$ is the state, $u(t)$ is the controller and $\delta(t)$ is a bounded disturbance satisfying $|\delta(t)|\leq L, \forall t\geq t_0$, for a known constant $L$.

Let $T_c>0$ be a desired \textit{UBST} for~\eqref{Eq:PredefinedSystem}. Our aim is to present a methodology for designing a stabilizing controllers $u(t)$, such that:
\begin{enumerate}
    \item the closed-loop system~\eqref{Eq:PredefinedSystem} is fixed-time stable;
    \item $T_c$ is a \textit{UBST};
    \item with an appropriate selection of the control parameters, the \textit{UBST} can be made arbitrarily tight (i.e, the slack between the predefined \textit{UBST} and the least \textit{UBST} can be set arbitrarily small). 
    \item the algorithm is non-autonomous with a bounded time-varying gain.
\end{enumerate}

Our methodology consists of defining $u(t)$ as a piecewise controller. First, a non-autonomous controller drives the state to the origin, ensuring that the origin is reached, regardless of the initial condition, in time $t< t_0+T_c$. This controller maintains the state at the origin until time $t=t_0+T_c$ when switching occurs to an autonomous controller designed to maintain the state at the origin despite the disturbance. 

\begin{assumption}
\label{Assum:AssympChain}
The mapping $w_{L}:\mathbb{R}^n\to\mathbb{R}$ is such that with $u(t)=w_{L}(x)$, the perturbed system~\eqref{Eq:PredefinedSystem} is asymptotically stable for all disturbances satisfying $|\delta(t)|\leq L$, for all $t\geq t_0$.
\end{assumption}

Assumption~\ref{Assum:AssympChain} means that we already have a robust controller for system~\eqref{Assum:AssympChain}, but such a controller may not satisfy the real-time constraints. This controller is the one that we will use to maintain the state at the origin for all $t\geq t_0+T_c$. If $L=0$, then $w_{L}(x)$ can be chosen as an appropriate linear state feedback~\cite{Kailath80}. If $L>0$, then $w_{L}(x)$ can be chosen, for instance, as a high order sliding mode control, such as~\cite{Ding2015}. 

Our approach is based on the following time transformation:
\begin{lemma}
\label{Lemma:ParTrans}
Let $\eta$ be a constant satisfying $0<\eta\leq 1$. Then, the function $\varphi(t)=\tau=-\alpha^{-1}\ln(1-\eta(t-t_0)/T_c)$, defines a parameter transformation with $\varphi^{-1}(\tau)=t=\eta^{-1}T_c(1-e^{-\alpha\tau})+t_0$ as its inverse mapping.
\end{lemma}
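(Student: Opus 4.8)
The plan is to verify directly the three defining properties of a regular orientation-preserving parameter transformation in Definition~\ref{Def:RegularCurve}: that $\varphi$ is $C^1$, that it is bijective between suitable intervals, and that $\frac{d\varphi}{dt}>0$ throughout. Everything reduces to elementary properties of $\ln$ and $\exp$, so the work is organized as (i) fixing the domain and codomain, (ii) checking smoothness and positivity of the derivative, and (iii) confirming the inverse.

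First I would pin down the domain and codomain. Since $\varphi$ involves $\ln\bigl(1-\eta(t-t_0)/T_c\bigr)$, the natural domain is the set of $t$ for which the argument is strictly positive, i.e.\ $\mathcal{I}=[t_0,\,t_0+T_c/\eta)$; here $0<\eta\le1$ guarantees $T_c/\eta\ge T_c>0$, so this interval is nonempty. Evaluating the endpoints, $\varphi(t_0)=-\alpha^{-1}\ln(1)=0$, while as $t\to(t_0+T_c/\eta)^-$ the argument tends to $0^+$ and hence $\varphi(t)\to+\infty$ (using the standing sign assumption $\alpha>0$). Thus $\varphi$ maps $\mathcal{I}$ onto $\mathcal{I}'=[0,+\infty)$.

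Next I would establish smoothness and monotonicity. On $\mathcal{I}$ the argument of the logarithm is positive, so $\varphi$ is a composition of $C^1$ (indeed $C^\infty$) functions and is therefore $C^1(\mathcal{I})$. Differentiating,
\[
\frac{d\varphi}{dt}=\frac{\eta}{\alpha T_c\bigl(1-\eta(t-t_0)/T_c\bigr)},
\]
whose numerator and denominator are both strictly positive on $\mathcal{I}$; hence $\frac{d\varphi}{dt}>0$ everywhere, which gives orientation preservation and, in particular, strict monotonicity of $\varphi$.

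Finally, strict monotonicity yields injectivity, and surjectivity onto $[0,+\infty)$ follows either from the intermediate value theorem together with the limiting values above or, more concretely, by exhibiting the inverse. I would verify the claimed inverse by solving $\tau=-\alpha^{-1}\ln\bigl(1-\eta(t-t_0)/T_c\bigr)$ for $t$: exponentiating gives $e^{-\alpha\tau}=1-\eta(t-t_0)/T_c$, and rearranging yields $t=\eta^{-1}T_c(1-e^{-\alpha\tau})+t_0$, matching the stated $\varphi^{-1}$. This establishes bijectivity and completes the verification. There is no genuine obstacle here—the argument is a routine computation—so the only point requiring care is the bookkeeping of the domain: one must identify $\mathcal{I}=[t_0,t_0+T_c/\eta)$ correctly (rather than the larger $[t_0,\infty)$), so that the logarithm is defined and its derivative stays positive, and confirm the standing condition $\alpha>0$ under which $\varphi$ actually ranges over all of $[0,+\infty)$.
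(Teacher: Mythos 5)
Your proof is correct and takes essentially the same approach as the paper, which simply states that the lemma ``follows from Definition~\ref{Def:RegularCurve}''; you have filled in the routine verification (domain $[t_0,t_0+T_c/\eta)$, positivity of $\frac{d\varphi}{dt}=\frac{\eta}{\alpha(T_c-\eta(t-t_0))}$, and the explicit inverse) that the paper leaves implicit.
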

\begin{proof}
It follows from Definition~\ref{Def:RegularCurve}.
\end{proof}

To derive the controller designed to drive the state of system~\eqref{Assum:AssympChain} to the origin in a fixed-time, with predefined \textit{UBST} given by $T_c$, let us introduce the time-varying gain, which is parametrized by $T_c$ (the desired \textit{UBST}): 
\begin{equation}
\label{Eq:TBG}
    \kappa(t-t_0):=\left\lbrace
    \begin{array}{lll}
      \frac{\eta}{\alpha(T_c-\eta (t-t_0))}   & \text{if} & t\in[t_0,t_0+T_c) \\
       1  &  & \text{otherwise,}
    \end{array}
    \right.
\end{equation}
where $0<\eta\leq1$; together with the following auxiliary system:
\begin{align}
 \frac{dy_1}{d\tau}&=y_2\\
 \frac{dy_2}{d\tau}&=y_3-\alpha y_2\\
 &\vdots\\
 \frac{dy_{n-1}}{d\tau}&=y_n-\alpha (n-2)y_{n-1}\\
 \frac{dy_n}{d\tau}&=\upsilon(y)-\alpha (n-1)y_{n}+\pi(\tau), \label{Eq:TauSyst}
\end{align}
where $y=[y_1,\ldots,y_n]^T$ is the state, $\alpha>0$, $\upsilon(y)$ is the controller, $\tau$ is the new time, associated to $t$ by the time transformation $\tau=-\alpha^{-1}\ln(1-\eta(t-t_0)/T_c)$, and 
\begin{equation}
\pi(\tau):=\left.\left[\kappa(t-t_0)^{-n}\delta(t)\right]\right|_{t=\eta^{-1}T_c(1-e^{-\alpha\tau})+t_0} 
\end{equation}
is a disturbance. 

Notice that the disturbance $\pi(\tau)$ is vanishing, since $|\delta(t)|\leq L$, for all $t\geq t_0$ and 
\begin{equation}
\left.\kappa(t-t_0)^{-1}\right|_{t=\eta^{-1}T_c(1-e^{-\alpha\tau})+t_0}=\alpha\eta^{-1}T_c e^{-\alpha\tau}. 
\end{equation}
Thus, $\pi(\tau)$ is bounded and $\pi(\tau)\to 0$ as $\tau\to+\infty$. Moreover, notice that if $\eta<1$ then~\eqref{Eq:TBG} is bounded.

The controller designed to drive the origin of system~\eqref{Eq:PredefinedSystem} in a fixed-time upper bounded by $t_0+T_c$ consist on redesigning $\upsilon(y)$ with the time-varying gain~\eqref{Eq:TBG}, i.e. 
\begin{equation}
    u(t)=\kappa(t-t_0)^n\upsilon(\Omega(t-t_0)^{-1}x), \text{ for} t\in[t_0,t_0+T_c),
\end{equation} 
where 
\begin{equation}
\Omega(t-t_0)=\mbox{diag}(1,\kappa(t-t_0),\ldots,\kappa(t-t_0)^{n-1})\in\mathbb{R}^{n\times n}.
\end{equation}

To this end, we make the following assumption on the controller $\upsilon(y)$:
\begin{assumption}
\label{Assump:Auxiliary}
The map $\upsilon:\mathbb{R}^n\to\mathbb{R}$ is such that the auxiliary system~\eqref{Eq:TauSyst} satisfies:
\begin{itemize}
    \item the system~\eqref{Eq:TauSyst} is asymptotically stable with settling time function $\mathcal{T}(y_0)$,
    \item $T_{f}$ is the smallest known value such that, for all $y_0\in\mathbb{R}^n$, $\mathcal{T}(y_0)\leq T_{f}\in\Bar{\mathbb{R}}$ (Notice that, if $\sup_{y_0\in\mathbb{R}^n}\mathcal{T}(y_0)=+\infty$ or no upper bound is known for $\sup_{y_0\in\mathbb{R}^n}\mathcal{T}(y_0)$, then $T_{f}=+\infty$).
    \item 
    \begin{equation}
    \label{Eq:LimitSingularity}
    \lim_{t\to t_0+ T_c}\kappa(t-t_0)^{i-1}\left.y_i(\tau;y_0,\pi_{[0,\mathcal{T}(y_0))})\right|_{\tau=-\alpha^{-1}\ln(1-\eta(t-t_0)/T_c)}=0    
    \end{equation} for $i=1,\ldots,n$, where $y_i(\tau;y_0,\pi_{[0,\mathcal{T}(y_0))})$ is the $i$-th element of the solution of~\eqref{Eq:TauSyst}, denoted by $y(\tau;y_0,\pi_{[0,\mathcal{T}(y_0))})$.
    \end{itemize}
\end{assumption}

Notice that, if \eqref{Eq:TauSyst} is finite-time stable, the condition~\eqref{Eq:LimitSingularity} is trivially satisfied.

Now, we are ready to present our main result.

\begin{theorem}
\label{Th:Main}
Let $\kappa(t-t_0)$ be as in~\eqref{Eq:TBG} with $\eta:=(1-e^{-\alpha T_{f}})$ and $T_f$ as defined in Assumption~\ref{Assump:Auxiliary}. Then, if $w_L:\mathbb{R}^n\to\mathbb{R}$ satisfies Assumption~\ref{Assum:AssympChain}, $\upsilon:\mathbb{R}^n\to\mathbb{R}$ satisfies Assumption~\ref{Assump:Auxiliary}, and $u(t)$ is designed as
\begin{equation}
    u(t)=
    \left\lbrace 
    \begin{array}{lll}
        \kappa(t-t_0)^n\upsilon(\Omega(t-t_0)^{-1}x) & \text{if} & t\in[t_0,t_0+ T_c)\\
         w_L(x)& & \text{otherwise},
    \end{array}
    \right.
    \label{Eq:ProposedControl}
\end{equation}
with $\Omega(t-t_0):=\mbox{diag}(1,\kappa(t-t_0),\ldots,\kappa(t-t_0)^{n-1})$, then the system~\eqref{Eq:PredefinedSystem} is fixed-time stable with $T_c$ as the predefined \textit{UBST}.
\end{theorem}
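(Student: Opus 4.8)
The plan is to show that, on the interval $[t_0,t_0+T_c)$, the closed loop~\eqref{Eq:PredefinedSystem} driven by the non-autonomous branch of~\eqref{Eq:ProposedControl} is merely a state-rescaled, time-reparametrized copy of the auxiliary system~\eqref{Eq:TauSyst}, and then to transport the convergence of~\eqref{Eq:TauSyst} back to the original coordinates. First I would introduce the state rescaling $y=\Omega(t-t_0)^{-1}x$ together with the time-scaling $\tau=\varphi(t)$ of Lemma~\ref{Lemma:ParTrans}. Writing $\kappa:=\kappa(t-t_0)$, the two identities $\frac{d\tau}{dt}=\kappa$ and $\dot\kappa=\alpha\kappa^2$ hold on $[t_0,t_0+T_c)$ by direct differentiation of~\eqref{Eq:TBG}. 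Differentiating $x_i=\kappa^{i-1}y_i$ and substituting these identities, I would verify term by term that $\dot x_i=x_{i+1}$ for $i<n$ together with $\dot x_n=\kappa^n\upsilon(y)+\delta$ collapses exactly onto~\eqref{Eq:TauSyst} with the vanishing disturbance $\pi(\tau)$; this is a routine substitution rather than a new estimate. Since $\eta=1-e^{-\alpha T_f}$, Lemma~\ref{Lemma:ParTrans} also yields $\varphi(t_0)=0$ and $\varphi(t_0+T_c)=T_f$, so that, by Definition~\ref{Def:RegularCurve}, $t\in[t_0,t_0+T_c)$ corresponds bijectively, through an orientation-preserving regular parameter transformation, to $\tau\in[0,T_f)$.

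Next I would invoke Assumption~\ref{Assump:Auxiliary}: the auxiliary system~\eqref{Eq:TauSyst} is asymptotically stable with settling-time function $\mathcal{T}(y_0)\leq T_f$. Hence the solution $y(\tau;y_0,\pi_{[0,\mathcal{T}(y_0))})$ started from $y_0=\Omega(0)^{-1}x_0$ tends to the origin as $\tau\to\mathcal{T}(y_0)$, and this occurs while $\tau$ is still confined to $[0,T_f)$, i.e.\ while $t$ is still in $[t_0,t_0+T_c)$. The correspondence of the first step simultaneously supplies existence and uniqueness of the original trajectory on this interval, as it is the image of the well-posed auxiliary trajectory under the regular reparametrization and under the invertible rescaling $\Omega(t-t_0)$, which is nonsingular for every $t<t_0+T_c$.

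The crux, and the step I expect to be the main obstacle, is transporting convergence of $y$ into convergence of $x$ as $t\to(t_0+T_c)^-$. Because $x_i=\kappa^{i-1}y_i$ while $\kappa^{i-1}\to+\infty$ whenever $\eta=1$ (the purely asymptotic case, $T_f=+\infty$), this is a genuine $0\cdot\infty$ indeterminate form that convergence of $y$ alone cannot settle. This is exactly what condition~\eqref{Eq:LimitSingularity} of Assumption~\ref{Assump:Auxiliary} is designed to control: it states precisely that $\kappa^{i-1}y_i\to0$ as $t\to t_0+T_c$ for each $i=1,\dots,n$. I would therefore read off from~\eqref{Eq:LimitSingularity} that $x(t)\to0$ as $t\to(t_0+T_c)^-$, so that the origin of~\eqref{Eq:PredefinedSystem} is reached at a time no later than $t_0+T_c$, uniformly in $x_0$. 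When~\eqref{Eq:TauSyst} is finite-time stable the indeterminacy disappears, $y$ vanishes identically beyond $\mathcal{T}(y_0)$, and the conclusion is immediate, matching the remark following Assumption~\ref{Assump:Auxiliary}.

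Finally I would handle the switch at $t=t_0+T_c$ and assemble the statement. Having established $x(t_0+T_c)=0$, the controller switches to $w_L(x)$; by Assumption~\ref{Assum:AssympChain} the origin is an asymptotically stable equilibrium of the resulting autonomous closed loop for every admissible $|\delta(t)|\leq L$, and since the state already sits at the origin, uniqueness of solutions forces $x(t)\equiv0$ for all $t\geq t_0+T_c$. Concatenating the two phases, every trajectory reaches and then stays at the origin by time $t_0+T_c$, with the bound $T_c$ independent of both $x_0\in\mathbb{R}^n$ and $t_0\in\mathbb{R}_+$. This delivers asymptotic stability together with a uniformly bounded settling-time function $T(x_0,t_0)\leq T_c$, which is precisely fixed-time stability with predefined \textit{UBST} $T_c$ in the sense of Definition~\ref{def:fixed}.
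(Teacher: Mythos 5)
Your proposal is correct and follows essentially the same route as the paper's proof: the state rescaling $y=\Omega(t-t_0)^{-1}x$ combined with the time reparametrization of Lemma~\ref{Lemma:ParTrans} reduces the closed loop to the auxiliary system~\eqref{Eq:TauSyst}, whose convergence is then pulled back through $\varphi^{-1}$ to give $T(x_0,t_0)\leq T_c$. If anything, you are more explicit than the paper about the one delicate step --- resolving the $0\cdot\infty$ form in $x_i=\kappa^{i-1}y_i$ via condition~\eqref{Eq:LimitSingularity} when $\eta=1$ --- which the paper's proof uses only implicitly through Assumption~\ref{Assump:Auxiliary}.
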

\begin{proof}
Consider the coordinate change $x_i=\kappa(t-t_0)^{i-1}y_i$, $i=1,\ldots,n$, (i.e. $x=\Omega(t-t_0)y$) then $\dot{x}_i=\kappa(t-t_0)^{i-1}\dot{y}_i+(i-1)\dot{\kappa}(t-t_0)\kappa(t-t_0)^{i-2}y_i$ and the dynamics in the new coordinates are 
\begin{align}
    \dot{y}_i&=\kappa(t-t_0)^{1-i}[\kappa(t-t_0)^{i}y_{i+1}-(i-1)\dot{\kappa}(t-t_0)\kappa(t-t_0)^{i-2}y_i]\\
    &=\kappa(t-t_0)[y_{i+1}-(i-1)\dot{\kappa}(t-t_0)\kappa(t-t_0)^{-2}y_i]
\end{align}
for $i=1,\ldots,n-1$ and
\begin{align}
    \dot{y}_n&=\kappa(t-t_0)^{1-n}[-\kappa(t-t_0)^n\upsilon(\Omega(t-t_0)^{-1}x)+\delta(t)-(n-1)\dot{\kappa}(t-t_0)\kappa(t-t_0)^{n-2}y_n]\\
    &=\kappa(t-t_0)[-\upsilon(y)+\kappa(t-t_0)^{-n}\delta(t)-(n-1)\dot{\kappa}(t-t_0)\kappa(t-t_0)^{-2}y_n].
\end{align}

Notice that $\dot{\kappa}(t-t_0)\kappa(t-t_0)^{-2}=\alpha$.

Moreover, consider the parameter transformation given in Lemma~\ref{Lemma:ParTrans}, $\tau=-\alpha^{-1}\ln(1-\eta(t-t_0)/T_c)$, then $t=\eta^{-1}T_c(1-e^{-\alpha\tau})+t_0$ and 
\begin{align}
\left.\frac{dt}{d\tau}\right|_{\tau=-\alpha^{-1}\ln(1-\eta(t-t_0)/T_c)}&=\left. \left[\alpha\eta^{-1} T_c e^{-\alpha\tau}\right]\right|_{\tau=-\alpha^{-1}\ln(1-\eta(t-t_0)/T_c)}\\
&=\alpha\eta^{-1} T_c(1-\eta(t-t_0)/T_c)=\alpha\eta^{-1}(T_c-\eta(t-t_0))
\\
&=\kappa(t-t_0)^{-1}
\end{align}
Thus, the dynamic of the system in the $\tau$ variable is given by~\eqref{Eq:TauSyst}
where $\pi(\tau)=\left.\kappa(t-t_0)^{-n}\delta(t)\right|_{t=\eta^{-1}T_c(1-e^{-\alpha\tau})+t_0}$. Notice that, since $\delta(t)$ satisfies that $|\delta(t)|\leq L$, for all $t\geq t_0$  and
$\left.\kappa(t-t_0)^{-n}\right|_{t=-\eta^{-1}T_c(1-e^{-\alpha\tau})+t_0}=(\alpha \eta^{-1}T_ce^{-\alpha\tau})^n$, then $|\pi(\tau)|\leq (\alpha \eta^{-1}T_c)^n L$ and $\pi(\tau)\to 0$ as $\tau\to+\infty$. 

Let $\Omega(t-t_0)=\mbox{diag}(1,\kappa(t-t_0),\ldots,\kappa(t-t_0)^{n-1})$.
Since, the system~\eqref{Eq:TauSyst} is asymptotically stable with $\mathcal{T}(y_0)$ as its settling time function and $x_0=\Omega(0)y_0$, then~\eqref{Eq:PredefinedSystem} reaches the origin at $$T(x_0,t_0)=\lim_{\tau\to\mathcal{T}(\Omega(0)^{-1}x_0)}\eta^{-1}T_c(1-e^{-\tau})\leq T_c,$$ and the control $w_L(x_1,\ldots,x_n)$ maintains it at the origin for all $t\geq t_0+T(x_0,t_0)$.

Thus,~\eqref{Eq:PredefinedSystem} is fixed-time stable with $T_c$ as the predefined \textit{UBST}.
\end{proof}

\begin{remark}
Notice that system~\eqref{Eq:PredefinedSystem} can be seen as the error dynamics of a closed-loop system. Thus, the results derived in Theorem~\ref{Th:Main}, can be used to design stabilizing algorithms, trajectory tracking controllers, or regulators. 
\end{remark}

\begin{remark}
Without loss of generality, the results in this manuscript can be applied to controllable linear systems and feedback linearizable nonlinear systems~\cite{Isidori2013}. The extension to the multivariable case is straightforward.
\end{remark}

\begin{corollary}
\label{Cor:ConvProp}
The settling time function of system~\eqref{Eq:PredefinedSystem}, under the controller~\eqref{Eq:ProposedControl}, is given by $T(x_0,t_0)=\lim_{\tau\to\mathcal{T}(\Omega(0)^{-1}x_0)}\eta^{-1}T_c(1-e^{-\tau})$. Additionally, the following holds:
\begin{enumerate}
    \item If $\mathcal{T}(y_0)=+\infty$ for every $y_0\neq 0$, then the settling time of~\eqref{Eq:PredefinedSystem} is exactly $T_c$ for every $x_0\neq0$.
    \item If $\sup_{z_0 \in \mathbb{R}^n} \mathcal{T}(y_0)=+\infty$, then $T_c$ is the least \textit{UBST} of~\eqref{Eq:PredefinedSystem}.
    \item If there exist $T_f<+\infty$ such that, for all $y_0\in\mathbb{R}^n$, $\mathcal{T}(y_0)\leq T_{f}$, then $\kappa(t-t_0)$ is bounded for all $t\in[t_0,t_0+T(x_0,t_0))$ and all $x_0\in\mathbb{R}^n$.
\end{enumerate}
\end{corollary}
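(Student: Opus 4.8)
The plan is to read the settling-time formula directly off the proof of Theorem~\ref{Th:Main} and then to extract the three consequences by analysing a single scalar function of the auxiliary settling time. Write $\mathcal{T}:=\mathcal{T}(\Omega(0)^{-1}x_0)$ and, following the parameter transformation of Lemma~\ref{Lemma:ParTrans}, $g(\tau):=\eta^{-1}T_c(1-e^{-\alpha\tau})$, so that the claimed formula reads $T(x_0,t_0)=\lim_{\tau\to\mathcal{T}}g(\tau)$. The first fact I would record is that $\Omega(0)=\mbox{diag}(1,\kappa(0),\ldots,\kappa(0)^{n-1})$ is a fixed invertible matrix, since $\kappa(0)=\eta/(\alpha T_c)>0$; hence $x_0\neq0$ iff $\Omega(0)^{-1}x_0\neq0$, and $\sup_{x_0\in\mathbb{R}^n}\mathcal{T}$ coincides with $\sup_{y_0\in\mathbb{R}^n}\mathcal{T}(y_0)$. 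The second is that $g$ is continuous and strictly increasing on $[0,\infty)$ with range $[0,\eta^{-1}T_c)$ and $\lim_{\tau\to\infty}g(\tau)=\eta^{-1}T_c$; thus the limit equals $g(\mathcal{T})$ for finite $\mathcal{T}$ and equals $\eta^{-1}T_c$ for $\mathcal{T}=+\infty$. Throughout I would use the defining relation $\eta=1-e^{-\alpha T_f}$ from Theorem~\ref{Th:Main}.

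For item~(1) I would argue that the hypothesis $\mathcal{T}(y_0)=+\infty$ for every $y_0\neq0$ forces $\sup_{y_0}\mathcal{T}(y_0)=+\infty$, whence $T_f=+\infty$ by Assumption~\ref{Assump:Auxiliary} and therefore $\eta=1-e^{-\alpha T_f}=1$. Then, for any $x_0\neq0$, invertibility gives $\Omega(0)^{-1}x_0\neq0$ and so $\mathcal{T}=+\infty$, yielding $T(x_0,t_0)=\eta^{-1}T_c=T_c$ exactly. For item~(2) the same hypothesis again gives $\eta=1$, so $T(x_0,t_0)=g(\mathcal{T})=T_c(1-e^{-\alpha\mathcal{T}})<T_c$ whenever $\mathcal{T}<+\infty$. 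The key step is to identify the least UBST with $\sup_{x_0}T(x_0,t_0)$: the set of UBSTs is the ray $[\sup_{x_0}T(x_0,t_0),\infty)$, so it suffices to show this supremum equals $T_c$. Indeed, for any $T'<T_c$, pick $\tau^*$ with $g(\tau^*)>T'$ (possible since $g\uparrow T_c$), then choose $x_0$ with $\mathcal{T}>\tau^*$ (possible since $\sup_{x_0}\mathcal{T}=+\infty$); monotonicity of $g$ gives $T(x_0,t_0)\ge g(\tau^*)>T'$. Combined with $T(x_0,t_0)\le T_c$ from Theorem~\ref{Th:Main}, this gives $\sup_{x_0}T(x_0,t_0)=T_c$, so $T_c$ is the least UBST.

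For item~(3) the hypothesis $T_f<+\infty$ gives $\eta=1-e^{-\alpha T_f}\in(0,1)$. On $[t_0,t_0+T_c)$ one has $0\le t-t_0<T_c$, so $T_c-\eta(t-t_0)>T_c(1-\eta)>0$ and hence $\kappa(t-t_0)\le\eta/\bigl(\alpha T_c(1-\eta)\bigr)<+\infty$, a bound independent of $x_0$. Since $\mathcal{T}\le T_f$ here, one has $T(x_0,t_0)=g(\mathcal{T})\le g(T_f)=\eta^{-1}T_c(1-e^{-\alpha T_f})=T_c$, so $[t_0,t_0+T(x_0,t_0))\subseteq[t_0,t_0+T_c)$ for every $x_0$ and $\kappa$ is uniformly bounded there; evaluating at the settling time even yields the sharper estimate $\kappa(T(x_0,t_0))=\eta e^{\alpha\mathcal{T}}/(\alpha T_c)\le\eta e^{\alpha T_f}/(\alpha T_c)$, using $T_c-\eta\,T(x_0,t_0)=T_c e^{-\alpha\mathcal{T}}$.

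The only genuinely delicate points are the treatment of the boundary case $\mathcal{T}=+\infty$ inside the limit and the translation, via invertibility of $\Omega(0)$, between statements about $x_0$ and about $y_0$; both are routine once the monotone profile of $g$ and the identity $\eta=1-e^{-\alpha T_f}$ are in hand. No estimate requires reopening the transformed dynamics: everything follows from Theorem~\ref{Th:Main} together with elementary analysis of the scalar map $g$ and of $\kappa$.
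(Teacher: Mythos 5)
Your proposal is correct and follows exactly the route the paper intends: the paper gives no explicit proof of this corollary, treating it as an immediate consequence of the settling-time formula established in the proof of Theorem~\ref{Th:Main}, the identity $\eta=1-e^{-\alpha T_f}$, and the monotonicity of the time-scale map $\tau\mapsto\eta^{-1}T_c(1-e^{-\alpha\tau})$, which is precisely the scalar analysis you carry out. Your use of $e^{-\alpha\tau}$ (rather than the $e^{-\tau}$ appearing in the corollary's displayed formula) is the correct reading, consistent with Lemma~\ref{Lemma:ParTrans} and with the formula as it reappears in the proof of Proposition~\ref{Prop:Design}.
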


The next corollary states that, if system~\eqref{Eq:TauSyst} is fixed-time stable with a known \textit{UBST} then the time-varying gain $\kappa(t-t_0)$ is upper bounded, and the predefined \textit{UBST} of~\eqref{Eq:PredefinedSystem}, under the control~\eqref{Eq:ProposedControl}, which is given by $T_c$, can be set arbitrarily tight.

\begin{corollary}
\label{Cor:BoundedGain}
Assume that $T_f= T_{max}^*$ with a known constant $T_{max}^*<+\infty$ (i.e., system~\eqref{Eq:TauSyst} is fixed-time stable with a known \textit{UBST}). Then, $\eta<1$ and $\kappa(t-t_0)$ is upper bounded by
\begin{equation}
\label{Eq:Bound}
\kappa(t-t_0)\leq \left.\frac{\eta}{\alpha(T_c-\eta (t-t_0))}\right|_{t=\eta^{-1}T_c(1-e^{-\alpha T_{max}^*})+t_0}=\frac{\eta}{\alpha T_c(1- \eta)}<+\infty.
\end{equation}
Moreover, let $\hat{T}=\sup_{y_0\in\mathbb{R}^n}\mathcal{T}(y_0)$ and let $s_\alpha$ be the slack (parametrized by $\alpha$) between the least \textit{UBST} of~\eqref{Eq:PredefinedSystem} and the predefined one given by $T_c$, i.e.,  
$$
s_\alpha=\eta^{-1}T_c(1-e^{-\alpha \hat{T}})+t_0-(T_c+t_0)=T_c(1-e^{-\alpha T_{max}^*})^{-1}(1-e^{-\alpha \hat{T}})-T_c.
$$
Then, for every $\epsilon>0$ there exists $\alpha$ such that $s_\alpha\leq \epsilon$.
\end{corollary}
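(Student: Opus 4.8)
The plan is to verify the three assertions in turn, all of which reduce to elementary properties of the exponential once the structure set up in Theorem~\ref{Th:Main} is in place. Throughout I will rely on the identity $1-\eta = e^{-\alpha T_{max}^*}$, which is immediate from the choice $\eta = 1-e^{-\alpha T_{max}^*}$ prescribed in Theorem~\ref{Th:Main}.

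First, for $\eta<1$: since $T_{max}^*<+\infty$ and $\alpha>0$, the product $\alpha T_{max}^*$ is a strictly positive real, so $e^{-\alpha T_{max}^*}\in(0,1)$ and hence $\eta=1-e^{-\alpha T_{max}^*}\in(0,1)$. In particular $\eta<1$, which is the crucial fact, as it places the singularity of the time-varying gain strictly outside the convergence window. Indeed, the denominator $T_c-\eta(t-t_0)$ of $\kappa$ vanishes only at $t-t_0 = T_c/\eta > T_c$, so $\kappa(\cdot)$ remains finite and, being the reciprocal of a strictly decreasing positive affine function, is strictly increasing on all of $[t_0,t_0+T_c)$.

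Second, for the explicit bound I would combine this monotonicity with the settling-time estimate. Since $\kappa$ is increasing on $[t_0,t_0+T_c)$, its supremum over that interval is the left limit at $t=t_0+T_c$, namely $\frac{\eta}{\alpha(T_c-\eta T_c)} = \frac{\eta}{\alpha T_c(1-\eta)}$. To see that this is the time-transform image of $\tau=T_{max}^*$, note that $t=\eta^{-1}T_c(1-e^{-\alpha T_{max}^*})+t_0$ gives $t-t_0=\eta^{-1}T_c\,\eta=T_c$, which is exactly the evaluation displayed in~\eqref{Eq:Bound}. Because Assumption~\ref{Assump:Auxiliary} guarantees $\mathcal{T}(y_0)\le T_f=T_{max}^*$ for every $y_0$, every nonzero trajectory reaches the origin at a $\tau$-time no larger than $T_{max}^*$; equivalently, on $[t_0,t_0+T(x_0,t_0))$ one has $t-t_0\le T_c$, so $\kappa$ never exceeds $\frac{\eta}{\alpha T_c(1-\eta)}<+\infty$, uniformly in $x_0$. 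This establishes~\eqref{Eq:Bound}.

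Third, for the slack I would pass to the limit $\alpha\to+\infty$. Substituting $\eta=1-e^{-\alpha T_{max}^*}$ into the definition of $s_\alpha$ and simplifying gives
\begin{equation}
s_\alpha = T_c\left(\frac{1-e^{-\alpha\hat{T}}}{1-e^{-\alpha T_{max}^*}}-1\right) = T_c\,\frac{e^{-\alpha T_{max}^*}-e^{-\alpha\hat{T}}}{1-e^{-\alpha T_{max}^*}}.
\end{equation}
Since $\hat{T}=\sup_{y_0}\mathcal{T}(y_0)\le T_{max}^*$ and $\hat{T}>0$ for any nontrivial dynamics, the numerator satisfies $\abs{e^{-\alpha T_{max}^*}-e^{-\alpha\hat{T}}}\le e^{-\alpha\hat{T}}$, while the denominator tends to $1$; hence $\abs{s_\alpha}\le \frac{T_c\,e^{-\alpha\hat{T}}}{1-e^{-\alpha T_{max}^*}}\to 0$ as $\alpha\to+\infty$. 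Consequently, given any $\epsilon>0$ it suffices to pick $\alpha$ large enough that the right-hand side drops below $\epsilon$, which yields $s_\alpha\le\epsilon$ and completes the argument. The computations are routine; the one genuine point worth emphasizing is the tension revealed by the second and third parts, namely that the same parameter $\alpha$ which tightens the slack (via $\alpha\to+\infty$) simultaneously inflates the gain bound $\frac{\eta}{\alpha T_c(1-\eta)}=\frac{e^{\alpha T_{max}^*}-1}{\alpha T_c}$, so boundedness of $\kappa$ and arbitrary tightness of the \textit{UBST} are compatible yet quantitatively opposed, both flowing from the single substitution $\eta=1-e^{-\alpha T_{max}^*}$.
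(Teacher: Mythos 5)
Your proof is correct. The paper states Corollary~\ref{Cor:BoundedGain} without supplying any proof, so there is no argument to compare against; your write-up fills that gap with exactly the computations the author evidently had in mind: the identity $1-\eta=e^{-\alpha T_{max}^*}$, the monotonicity of $\kappa$ on $[t_0,t_0+T_c)$ together with the observation that the substitution $t=\eta^{-1}T_c(1-e^{-\alpha T_{max}^*})+t_0$ collapses to $t-t_0=T_c$, and the limit $\alpha\to+\infty$ for the slack. Two small remarks. First, since $\hat{T}\leq T_f=T_{max}^*$ by Assumption~\ref{Assump:Auxiliary}, one actually has $s_\alpha\leq 0$ for every $\alpha$, so the literal claim ``$s_\alpha\leq\epsilon$'' is vacuous; you prove the meaningful statement $|s_\alpha|\to 0$, which is what the corollary intends, and your bound $|s_\alpha|\leq T_c e^{-\alpha\hat{T}}/(1-e^{-\alpha T_{max}^*})$ does require $\hat{T}>0$, a nondegeneracy condition worth stating as a hypothesis rather than as ``nontrivial dynamics.'' Second, the bound~\eqref{Eq:Bound} as written holds on $[t_0,t_0+T_c)$ where $\kappa$ is given by its rational expression; for $t\geq t_0+T_c$ the gain equals $1$, which need not lie below $\eta/(\alpha T_c(1-\eta))$, but this is an imprecision of the statement, not of your argument. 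Your closing observation on the tension between tightening $s_\alpha$ and inflating $(e^{\alpha T_{max}^*}-1)/(\alpha T_c)$ reproduces the content of Remark~\ref{Remark:TradeOff}.
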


\begin{remark}
\label{Remark:TradeOff}
Notice that, since $\eta$ is a function of the parameter $\alpha$, by tuning $\alpha$, we can select the bound of $\kappa(t-t_0)$. Alternatively, by tuning $\alpha$, we can make the \textit{UBST} arbitrarily tight (i.e., the slack $s_\alpha$ arbitrarily small). However, notice that as $\alpha$ increases, the bound in~\eqref{Eq:Bound} increases, and the slack $s_\alpha$ decreases. Thus, one needs to establish a trade-off between the size of the upper bound for $\kappa(t-t_0)$ and how small the slack $s_\alpha$ is. Clearly, the bigger the bound on the time-varying gain, the higher the tolerance required for simulation, and the more sensitive the controller becomes to measurement noise.
\end{remark}

\begin{example}
A plot of the time scale transformation with $t_0=0$, $\eta=1$ and $T_c=10$ is shown in Figure~\ref{fig:TimeScale}. Notice that $\alpha$ provides a degree of freedom to select how a time in $\tau$ maps to a time in $t$. By Corollary~\ref{Cor:BoundedGain}, by an appropriate selection of $\alpha$, the \textit{UBST} can be set arbitrarily tight.
\begin{figure}
    \centering
\def\svgwidth{9cm}
\begingroup%
  \makeatletter%
  \providecommand\color[2][]{%
    \errmessage{(Inkscape) Color is used for the text in Inkscape, but the package 'color.sty' is not loaded}%
    \renewcommand\color[2][]{}%
  }%
  \providecommand\transparent[1]{%
    \errmessage{(Inkscape) Transparency is used (non-zero) for the text in Inkscape, but the package 'transparent.sty' is not loaded}%
    \renewcommand\transparent[1]{}%
  }%
  \providecommand\rotatebox[2]{#2}%
  \newcommand*\fsize{\dimexpr\f@size pt\relax}%
  \newcommand*\lineheight[1]{\fontsize{\fsize}{#1\fsize}\selectfont}%
  \ifx\svgwidth\undefined%
    \setlength{\unitlength}{432bp}%
    \ifx\svgscale\undefined%
      \relax%
    \else%
      \setlength{\unitlength}{\unitlength * \real{\svgscale}}%
    \fi%
  \else%
    \setlength{\unitlength}{\svgwidth}%
  \fi%
  \global\let\svgwidth\undefined%
  \global\let\svgscale\undefined%
  \makeatother%
  \begin{picture}(1,0.73611111)%
    \lineheight{1}%
    \setlength\tabcolsep{0pt}%
    \put(0,0){\includegraphics[width=\unitlength,page=1]{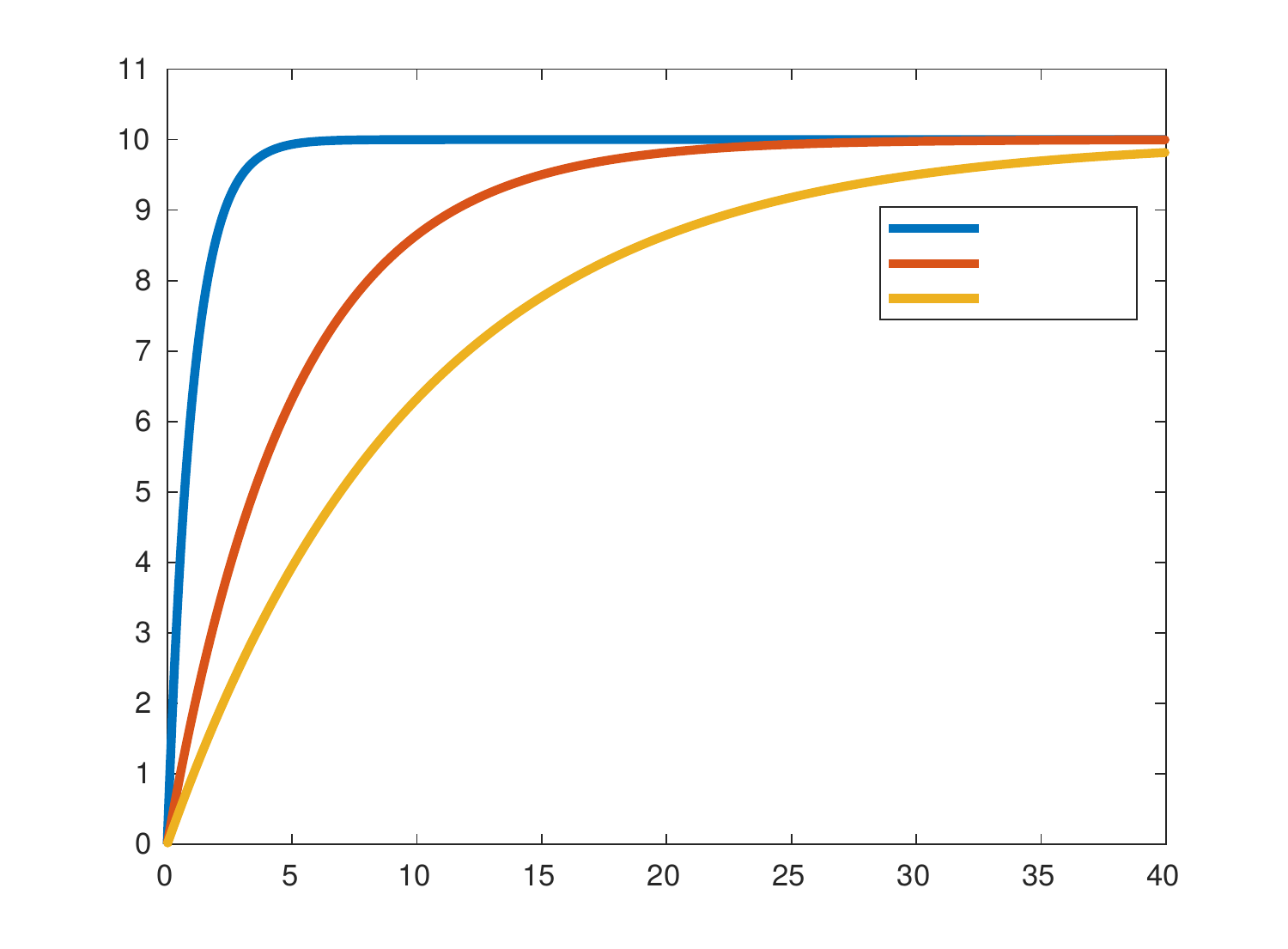}}%
    \put(0.49882888,0.01102061){\color[rgb]{0,0,0}\makebox(0,0)[lt]{\lineheight{1.25}\smash{\begin{tabular}[t]{l}$\tau$\end{tabular}}}}%
    \put(0.07657444,0.35272011){\color[rgb]{0,0,0}\rotatebox{90}{\makebox(0,0)[lt]{\lineheight{1.25}\smash{\begin{tabular}[t]{l}$t$\end{tabular}}}}}%
    \put(0.47,0.70654337){\color[rgb]{0,0,0}\makebox(0,0)[lt]{\lineheight{1.25}\smash{\begin{tabular}[t]{l}$\tau$ vs $t$\end{tabular}}}}%
    \footnotesize{
    \put(0.785,0.55){\color[rgb]{0,0,0}\makebox(0,0)[lt]{\lineheight{1.25}\smash{\begin{tabular}[t]{l}$\alpha=1$\end{tabular}}}}%
    \put(0.785,0.525){\color[rgb]{0,0,0}\makebox(0,0)[lt]{\lineheight{1.25}\smash{\begin{tabular}[t]{l}$\alpha=0.2$\end{tabular}}}}%
    \put(0.785,0.5){\color[rgb]{0,0,0}\makebox(0,0)[lt]{\lineheight{1.25}\smash{\begin{tabular}[t]{l}$\alpha=0.1$\end{tabular}}}}%
    }
  \end{picture}%
\endgroup%
    \caption{Example of a time-scaling with $\eta=1$ and $T_c=10$.}
    \label{fig:TimeScale}
\end{figure}
\end{example}

\section{Examples: Redesigning autonomous controllers to obtain fixed-time non-autonomous system with predefined \textit{UBST}}
\label{Sec:Examples}

Let $a_i$, $i=1,\ldots,n$ be such that $s^n+a_1s^{n-1}+\cdots+a_n=0$ has roots $\lambda_i=\alpha(1-i)$, $i=1,\ldots,n$, and let $Q$ be the similarity transformation taking the linear system $\frac{dy}{d\tau}=Ay+Bu$, where
\begin{equation}
    A=\left[
    \begin{array}{cccccc}
        0 & 1 & 0 &\cdots & 0 & 0 \\
        0 & -\alpha& 1 &\cdots & 0 & 0 \\
        \vdots & \vdots & \vdots & \ddots & \vdots &\vdots \\
        0 & 0 & 0 &\cdots & 1 & 0\\
        0 & 0 & 0 &\cdots & \alpha(n-2) & 1\\
        0 & 0 & 0 &\cdots & 0 & \alpha(n-1)\\
    \end{array}
    \right] \text{ and }
    B=\left[
    \begin{array}{c}
        0 \\
        0 \\
        0\\
        \vdots \\
        0\\
        1\\
    \end{array}
    \right],
\end{equation}
into the controller canonical form~\cite{Kailath80},
i.e. $y=Qz$ where
\begin{equation}
\label{Eq:Tranform}
Q:=\mathcal{C}(A,B)\mathcal{V}
\end{equation}
with 
$\mathcal{C}(A,B)$ as the controllability matrix of the pair $(A,B)$ and
$$
\mathcal{V}=\left[
\begin{array}{ccccc}
a_{n-1} & a_{n-2} & \cdots & a_1 & 1\\ 
a_{n-2} & a_{n-3} & \cdots &  1 & 0\\ 
\vdots & \vdots & \ddots & \vdots & \vdots\\
a_1 & 1 & \cdots & 0 & 0\\
1 & 0 & \cdots & 0 & 0
\end{array}
\right].
$$

\begin{proposition}
\label{Prop:Design}
Let $w_{L_0}(z)$ be such that the system given by
\begin{align}
    \frac{dz_i}{d\tau}&=z_{i+1}\\
    \intertext{for $i=1,\ldots,n-1$, and}
    \frac{dz_n}{d\tau}&=w_{L_0}(z)+\pi(\tau), \label{Eq:Ztau}
\end{align}
where $z=[z_1,\ldots,z_n]^T$, is asymptotically stable with settling-time function given by $\mathcal{T}_z(z_0)$. Then, with
\begin{equation}
\label{Eq:Controlw}
    \upsilon(y)=\left.\left[w_{L_0}(z)+a_nz_1+\cdots+a_1z_n\right]\right|_{z=Q^{-1}y}
\end{equation}
where $a_i$, $i=1,\ldots,n$ are the coefficients of the characteristic polynomial of $A$ and $Q$ is defined in~\eqref{Eq:Tranform}, the system~\eqref{Eq:TauSyst} is asymptotically stable and
the settling time function of~\eqref{Eq:TauSyst} is $\mathcal{T}(y_0)=\mathcal{T}_z(Q^{-1}y_0)$. Thus, under Assumption~\ref{Assum:AssympChain} and~\ref{Assump:Auxiliary}, the controller~\eqref{Eq:ProposedControl} where $\upsilon(y)$ is given by~\eqref{Eq:Controlw}, is fixed-time stable with $T_c$ as the predefined \textit{UBST}.
\end{proposition}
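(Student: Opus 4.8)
The plan is to recognize \eqref{Eq:Controlw} as a pole-placement construction that, after the constant linear change of coordinates $y=Qz$, turns the auxiliary system \eqref{Eq:TauSyst} into precisely the pure chain of integrators \eqref{Eq:Ztau}. Once that reduction is in hand, the argument collapses to the invariance of asymptotic stability and of the settling-time function under an invertible linear map, followed by a direct appeal to Theorem~\ref{Th:Main}.

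First I would record the standard fact that $Q=\mathcal{C}(A,B)\mathcal{V}$ from \eqref{Eq:Tranform} is nonsingular and carries $(A,B)$ to controller canonical form. This rests on controllability of $(A,B)$, which here is immediate: since $A$ is upper bidiagonal with unit superdiagonal and $B=[0,\ldots,0,1]^T$, the vector $A^kB$ has its topmost nonzero entry in position $n-k$ (that entry being a product of superdiagonal $1$'s), so the columns $B,AB,\ldots,A^{n-1}B$ form a nonsingular (anti-triangular) matrix $\mathcal{C}(A,B)$. Consequently $Q^{-1}AQ$ is the companion matrix whose last row is $[-a_n,\ldots,-a_1]$ and $Q^{-1}B=B$, where the $a_i$ are the coefficients of the characteristic polynomial $\prod_{i=1}^{n}\left(s-\alpha(1-i)\right)$ of $A$.

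Second, I would write \eqref{Eq:TauSyst} compactly as $\frac{dy}{d\tau}=Ay+B\upsilon(y)+B\pi(\tau)$ and substitute $y=Qz$, obtaining $\frac{dz}{d\tau}=(Q^{-1}AQ)z+B\,\upsilon(Qz)+B\,\pi(\tau)$. The first $n-1$ rows give $\frac{dz_i}{d\tau}=z_{i+1}$, and the last row gives $\frac{dz_n}{d\tau}=-a_nz_1-\cdots-a_1z_n+\upsilon(Qz)+\pi(\tau)$. The decisive step is that substituting \eqref{Eq:Controlw}, namely $\upsilon(Qz)=w_{L_0}(z)+a_nz_1+\cdots+a_1z_n$, cancels the companion-form terms exactly, collapsing the $z$-dynamics to \eqref{Eq:Ztau}, a chain of integrators driven by $w_{L_0}(z)+\pi(\tau)$. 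Because $Q^{-1}B=B$, the disturbance $\pi(\tau)$ enters only the $z_n$ equation and is unaltered by the change of variables, matching \eqref{Eq:Ztau} exactly. I would then transfer the stability properties back: since $Q$ is constant and invertible, trajectories of \eqref{Eq:TauSyst} and \eqref{Eq:Ztau} are in bijection through $y(\tau)=Qz(\tau)$ with $z_0=Q^{-1}y_0$, so $y(\tau)\to 0\iff z(\tau)\to 0$ and the reaching time is unchanged; hence \eqref{Eq:TauSyst} is asymptotically stable with $\mathcal{T}(y_0)=\mathcal{T}_z(Q^{-1}y_0)$, which establishes the first two items of Assumption~\ref{Assump:Auxiliary}. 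With $\upsilon$ thus admissible and $w_L$ satisfying Assumption~\ref{Assum:AssympChain}, the fixed-time stability of \eqref{Eq:PredefinedSystem} under \eqref{Eq:ProposedControl} with $T_c$ as the predefined \textit{UBST} follows verbatim from Theorem~\ref{Th:Main}.

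The main obstacle I anticipate is purely the sign bookkeeping in the second step: one must fix the companion-form convention so that the added terms $a_nz_1+\cdots+a_1z_n$ \emph{cancel} the feedback $-a_nz_1-\cdots-a_1z_n$ rather than reinforce it, and one must confirm that $\pi(\tau)$ is unaffected because $Q^{-1}B=B$. Everything else — controllability of $(A,B)$, existence of the canonical form via $Q=\mathcal{C}(A,B)\mathcal{V}$, and invariance of the settling-time function under a linear diffeomorphism — is standard and can simply be quoted.
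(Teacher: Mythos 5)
Your proposal is correct and follows essentially the same route as the paper: change coordinates via $y=Qz$ to put \eqref{Eq:TauSyst} in companion form, observe that \eqref{Eq:Controlw} cancels the terms $-a_nz_1-\cdots-a_1z_n$ so the closed loop reduces exactly to \eqref{Eq:Ztau}, and then transfer the settling-time function through the invertible linear map before invoking Theorem~\ref{Th:Main}. Your write-up is in fact somewhat more complete than the paper's (which omits the controllability check and the observation that $Q^{-1}B=B$), but the underlying argument is identical.
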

\begin{proof}
Notice that, under the coordinate change $y=Qz$, the system~\eqref{Eq:TauSyst} is given by
\begin{align}
    \frac{dz_i}{d\tau}&=z_{i+1}\\
    \intertext{for $i=1,\ldots,n-1$, and}
    \frac{dz_n}{d\tau}&=\upsilon(Qz)-a_nz_1-\cdots-a_1z_n+\pi(\tau).
\end{align}
Thus, if $\upsilon(y)$ is given by~\eqref{Eq:Controlw}, then, the closed-loop system becomes~\eqref{Eq:Ztau}. Thus, by Assumption~\ref{Assum:AssympChain}, ~\eqref{Eq:Ztau} is asymptotically stable with settling-time function given by $\mathcal{T}_{\eqref{Eq:Ztau}}(z_0)$, where $z_0=Q^{-1}\Omega(\mathbf{0})x_0$. Since $T(x_0,t_0)=\lim_{\tau\to\mathcal{T}_{\eqref{Eq:Ztau}}(z_0)}\eta^{-1}T_c(1-e^{-\alpha\tau})\leq T_c$.
\end{proof}

Notice that, if~\eqref{Eq:Ztau} is finite-time stable (resp. fixed-time stable) then, if $\upsilon(y)$ is given by~\eqref{Eq:Controlw}, the system~\eqref{Eq:TauSyst} is also finite-time stable (resp. fixed-time stable). Moreover, if there exists $T_{max}^*<+\infty$ such that, for all $z_0\in\mathbb{R}^n$, $\mathcal{T}_z(z_0)\leq T_{max}^*$, then for all $y_0\in\mathbb{R}^n$, $\mathcal{T}(y_0)\leq T_{max}^*$. This result is interesting, because it allows to derive non-autonomous controllers with the desired properties based on autonomous controllers from the literature. For instance, fixed-time controllers for~\eqref{Eq:Ztau} have been proposed, for instance, in~\citep{Polyakov2012a,Basin2016ContinuousRegulators,Aldana-Lopez2018,Mishra2018,Zimenko2018}.

\subsection{Deriving fixed-time controllers with predefined \textit{UBST}}

In this subsection, we illustrate our main result by deriving different controllers with predefined \textit{UBST}. These results are derived by applying Proposition~\ref{Prop:Design}. We illustrate our methodology by redesigning three controllers. A linear feedback control~\cite{Kailath80}, the fixed-time controller, proposed in~\cite{Aldana-Lopez2018} and the homogeneous fixed-time controller proposed in~\cite{Zimenko2018}. Additional results can be obtained similarly. The simulations are performed in the OpenModelica software, using the DASSL solver.

The first controller is based on linear feedback control and allows us to obtain a fixed-time stable closed-loop system with settling time exactly at $T_c$. To introduce this result, let us first provide a sufficient condition such that~\eqref{Eq:LimitSingularity} holds for the case when $\upsilon(\cdot)$ is linear.

\begin{lemma}
\label{Lem:ValidLambdas}
Let $\pi(\tau)\equiv 0$ and let \eqref{Eq:TauSyst} be a linear system with eigenvalues $\lambda_i$, $i=1,\ldots,n$. Then, if $\min_{\lambda_i}(|\Real{\lambda_i}|\alpha^{-1})>n$ then~\eqref{Eq:LimitSingularity} holds.
\end{lemma}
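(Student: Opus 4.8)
The plan is to reduce condition~\eqref{Eq:LimitSingularity} to a competition, in the time-scale $\tau$, between the exponential growth of the powers of the time-varying gain $\kappa$ and the exponential decay of the linear solution $y_i(\tau)$. First I would rewrite $\kappa$ in terms of $\tau$: using the relation $1-\eta(t-t_0)/T_c=e^{-\alpha\tau}$ coming from Lemma~\ref{Lemma:ParTrans}, the definition~\eqref{Eq:TBG} gives $T_c-\eta(t-t_0)=T_c e^{-\alpha\tau}$, hence $\kappa(t-t_0)=\frac{\eta}{\alpha T_c}e^{\alpha\tau}$ and $\kappa(t-t_0)^{i-1}=(\alpha T_c)^{-(i-1)}e^{(i-1)\alpha\tau}$. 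Since a linear asymptotically stable system is not finite-time convergent, $T_f=+\infty$ and therefore $\eta=1-e^{-\alpha T_f}=1$; consequently the limit $t\to t_0+T_c$ appearing in~\eqref{Eq:LimitSingularity} corresponds to $\tau\to+\infty$, which is exactly where the decay of $y_i(\tau)$ must be quantified.

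Next I would bound the solution of the linear system. With $\pi\equiv0$ the closed loop~\eqref{Eq:TauSyst} reads $\frac{dy}{d\tau}=A_{cl}y$ for some Hurwitz matrix $A_{cl}$ with spectrum $\{\lambda_1,\ldots,\lambda_n\}$, so $y(\tau)=e^{A_{cl}\tau}y_0$. Writing $\mu:=\min_{i}\abs{\Real{\lambda_i}}$, so that $-\mu$ is the largest real part among the eigenvalues, the standard estimate for the exponential of a Hurwitz matrix gives, componentwise, $\abs{y_i(\tau)}\le C\,p(\tau)\,e^{-\mu\tau}$, where $p(\tau)$ is a polynomial of degree at most $n-1$ accounting for the possible Jordan blocks attached to repeated eigenvalues.

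Combining the two estimates yields $\abs{\kappa(t-t_0)^{i-1}y_i(\tau)}\le C(\alpha T_c)^{-(i-1)}p(\tau)\,e^{((i-1)\alpha-\mu)\tau}$. To absorb the polynomial factor cleanly I would use that, for every $\alpha>0$, $p(\tau)\le C' e^{\alpha\tau}$ for all sufficiently large $\tau$, which upgrades the exponent to $i\alpha-\mu$. The right-hand side therefore tends to zero as $\tau\to+\infty$ provided $i\alpha-\mu<0$ for every $i\in\{1,\ldots,n\}$; the binding case is $i=n$, which is guaranteed precisely by the hypothesis $\mu>n\alpha$, equivalently $\min_{\lambda_i}(\abs{\Real{\lambda_i}}\alpha^{-1})>n$. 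This establishes~\eqref{Eq:LimitSingularity} for each $i=1,\ldots,n$.

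The main obstacle is the treatment of the polynomial factor $p(\tau)$: in the diagonalizable case one obtains the sharper sufficient condition $\mu>(n-1)\alpha$, and the extra unit in the stated hypothesis is exactly the price paid for dominating the worst-case Jordan-block growth $\tau^{n-1}$ by the crude bound $e^{\alpha\tau}$. A secondary point requiring care is the identification $\eta=1$, without which the limit $t\to t_0+T_c$ would correspond to a finite $\tau^\ast$ at which the linear solution is generically nonzero, so that~\eqref{Eq:LimitSingularity} could fail.
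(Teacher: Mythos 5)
Your proof is correct and follows essentially the same route as the paper: both arguments reduce \eqref{Eq:LimitSingularity} to a rate comparison between the growth of $\kappa(t-t_0)^{i-1}$ and the decay of the modal terms $e^{\Real{\lambda}\tau}$ of the linear solution, the only difference being that the paper substitutes back to the $t$ variable and reads off powers of $(1-\eta(t-t_0)/T_c)$, while you stay in the $\tau$ variable and compare exponents directly. Your version is in fact slightly more explicit than the paper's on two points it leaves implicit — the absorption of the Jordan-block polynomial factor (which is why $>n$ rather than $>n-1$ suffices uniformly) and the identification $\eta=1$ arising from $T_f=+\infty$ for linear systems, without which the limit $t\to t_0+T_c$ would correspond to a finite $\tau$ and the claim would fail.
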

\begin{proof}
Notice that $y_i(\tau;x_0)$ is a linear combination of terms of the form:
\begin{itemize}
    \item $e^{\Real{\lambda_i}\tau}$ for distinct $\lambda_i$,
    \item $t^ie^{\Real{\lambda_i}\tau}$, $i=0,\ldots,j-1$ for $\lambda_i$ with algebraic multiplicity $j$ and
    \item $e^{\Real{\lambda_i}\tau}\sin(\Imag{\lambda} \tau+\theta)$ for complex complex conjugate $\lambda_i$ with and $\tan(\theta)=\frac{\Imag{\lambda_i}}{\Real{\lambda_i}}$.
\end{itemize}
However, since 
$$\left.e^{\Real{\lambda_i}\tau}\right|_{\tau=-\alpha^{-1}\ln(1-\eta(t-t_0)/T_c)}=e^{\ln(1-\eta(t-t_0)/T_c)^{|\Real{\lambda_i}|\alpha^{-1}}}=(1-\eta(t-t_0)/T_c)^{|\Real{\lambda_i}|\alpha^{-1}},$$ 
then 
$$\kappa(t-t_0)^{i-1}\left.e^{\Real{\lambda_i}\tau}\right|_{\tau=-\alpha^{-1}\ln(1-\eta(t-t_0)/T_c)}=\eta^{i-1}\frac{(1-\eta(t-t_0)/T_c)^{|\Real{\lambda_i}|\alpha^{-1}}}{(\alpha(T_c-\eta(t-t_0)))^{i-1}}, \ i=1,\ldots,n.$$
Thus, if $\min_{\lambda_i}(|\Real{\lambda_i}|\alpha^{-1})>n$, then~\eqref{Eq:LimitSingularity} holds.
  
\end{proof}

\begin{corollary}
\label{Cor:Linear}
Let $\delta(t)\equiv0$, if $\upsilon(y)$ is given as in~\eqref{Eq:Controlw} with
\begin{equation}
\label{Eq:linear}
w_{L_0}(z)=-k_nz_1-\cdots-k_1z_n
\end{equation}
where $s^n+k_1s^{n-1}+\cdots+k_n$ is a Hurwitz polynomial with roots $\lambda_i$ satisfying $\min_{\lambda_i}(|\Real{\lambda_i}|\alpha^{-1})>n$, then~\eqref{Eq:PredefinedSystem} is fixed-time stable with $T_c$ as the settling time for every nonzero trajectory.
\end{corollary}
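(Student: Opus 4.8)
The plan is to assemble the result from three ingredients already established: Proposition~\ref{Prop:Design}, Lemma~\ref{Lem:ValidLambdas}, and the first item of Corollary~\ref{Cor:ConvProp}. The overall strategy is to show that the linear choice of $w_{L_0}$ produces an auxiliary system~\eqref{Eq:TauSyst} that is asymptotically (in fact exponentially) stable with an everywhere-infinite settling-time function, and whose spectrum satisfies the hypothesis of Lemma~\ref{Lem:ValidLambdas}; this makes Assumption~\ref{Assump:Auxiliary} hold, after which Corollary~\ref{Cor:ConvProp} yields exact convergence at $T_c$.

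First I would note that $\delta(t)\equiv 0$ forces $\pi(\tau)\equiv 0$, so we are squarely in the setting required by Lemma~\ref{Lem:ValidLambdas}. Substituting $w_{L_0}(z)=-k_nz_1-\cdots-k_1z_n$ into~\eqref{Eq:Ztau} gives a chain of integrators closed by linear feedback, i.e. a companion-form linear system whose characteristic polynomial is exactly $s^n+k_1s^{n-1}+\cdots+k_n$. Since this polynomial is Hurwitz, the system~\eqref{Eq:Ztau} is globally exponentially stable, hence asymptotically stable; being linear and exponentially stable, its trajectories decay to the origin but never reach it in finite time, so its settling-time function satisfies $\mathcal{T}_z(z_0)=+\infty$ for every $z_0\neq 0$. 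By Proposition~\ref{Prop:Design}, the controller $\upsilon(y)$ defined through~\eqref{Eq:Controlw} then renders~\eqref{Eq:TauSyst} asymptotically stable with $\mathcal{T}(y_0)=\mathcal{T}_z(Q^{-1}y_0)=+\infty$ for all $y_0\neq 0$, using that $Q$ is invertible.

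Next I would verify the only nontrivial hypothesis, the limit condition~\eqref{Eq:LimitSingularity}. This is where Lemma~\ref{Lem:ValidLambdas} is invoked, and the step requiring care is the identification of the eigenvalues of the closed-loop~\eqref{Eq:TauSyst}: under the similarity transformation $y=Qz$ the closed loop is mapped to the companion form~\eqref{Eq:Ztau} with $w_{L_0}$, and since similar matrices share their spectra, the eigenvalues of~\eqref{Eq:TauSyst} are precisely the roots $\lambda_i$ of $s^n+k_1s^{n-1}+\cdots+k_n$. Thus the spectral hypothesis of the corollary, $\min_{\lambda_i}(\abs{\Real{\lambda_i}}\alpha^{-1})>n$, is exactly the hypothesis of Lemma~\ref{Lem:ValidLambdas} for system~\eqref{Eq:TauSyst}, so~\eqref{Eq:LimitSingularity} holds and Assumption~\ref{Assump:Auxiliary} is satisfied (here $T_f=+\infty$, hence $\eta=1$).

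Finally, with Assumptions~\ref{Assum:AssympChain} and~\ref{Assump:Auxiliary} in force, Theorem~\ref{Th:Main} (equivalently the concluding claim of Proposition~\ref{Prop:Design}) gives that~\eqref{Eq:PredefinedSystem} is fixed-time stable with $T_c$ as the predefined \textit{UBST}; and because $\mathcal{T}(y_0)=+\infty$ for every $y_0\neq 0$, item~1 of Corollary~\ref{Cor:ConvProp} upgrades this to exact settling time $T_c$ for every nonzero trajectory. I expect the main obstacle to be the spectral identification step, namely confirming that the closed-loop eigenvalues of~\eqref{Eq:TauSyst} coincide with the roots of the chosen Hurwitz polynomial so that Lemma~\ref{Lem:ValidLambdas} transfers. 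The reason this is the crux is that, with $\eta=1$, the factor $\kappa(t-t_0)^{i-1}$ blows up as $t\to t_0+T_c$, and this growth must be dominated by the decay $(1-\eta(t-t_0)/T_c)^{\abs{\Real{\lambda_i}}\alpha^{-1}}$ of each modal component; the condition $\min_{\lambda_i}(\abs{\Real{\lambda_i}}\alpha^{-1})>n$ is precisely what secures that domination and hence the vanishing of the scaled state at the singular time.
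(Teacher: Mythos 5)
Your proof is correct and follows essentially the same route as the paper's own (very terse) argument: invoke Lemma~\ref{Lem:ValidLambdas} for the limit condition~\eqref{Eq:LimitSingularity}, observe that the closed-loop~\eqref{Eq:Ztau} is a Hurwitz companion-form system with eigenvalues $\lambda_i$ (so $\mathcal{T}(y_0)=+\infty$ and $\eta=1$), and conclude exact convergence at $T_c$ via item~1 of Corollary~\ref{Cor:ConvProp}. You simply make explicit the steps the paper calls ``trivial,'' including the similarity-invariance of the spectrum under $y=Qz$, which is a faithful elaboration rather than a different approach.
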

\begin{proof}
The proof follows trivially by Lemma~\ref{Lem:ValidLambdas} and Corollary~\ref{Cor:ConvProp}, item 1), and by noticing that~\eqref{Eq:Ztau} is asymptotically stable with eigenvalues $\lambda_i$, $i=1,\ldots,n$.
\end{proof}

\begin{example}
\label{Ex:Linear}
Consider a chain of three integrators and let $\alpha=1$ and $\eta=1$. The simulation of the proposed fixed-time control~\eqref{Eq:ProposedControl} with $\upsilon(y)$ as in~\eqref{Eq:Controlw} with $T_c=10$, $a_1=3$, $a_2=2$, $a_3=0$; $z_1=y_1$, $z_2=y_2$ and $z_3=(y_3-y_2)$, $w_{L_0}(z)$ as in~\eqref{Eq:linear} with $k_1=21$, $k_2=134.75$ and $k_3=257.25$ and $w_L(x)=-6x_1-11x_2-6x_3$ is shown in Figure~\ref{fig:Linear}.

\begin{figure}
    \centering
\def\svgwidth{16.0cm}
\begingroup%
  \makeatletter%
  \providecommand\color[2][]{%
    \errmessage{(Inkscape) Color is used for the text in Inkscape, but the package 'color.sty' is not loaded}%
    \renewcommand\color[2][]{}%
  }%
  \providecommand\transparent[1]{%
    \errmessage{(Inkscape) Transparency is used (non-zero) for the text in Inkscape, but the package 'transparent.sty' is not loaded}%
    \renewcommand\transparent[1]{}%
  }%
  \providecommand\rotatebox[2]{#2}%
  \newcommand*\fsize{\dimexpr\f@size pt\relax}%
  \newcommand*\lineheight[1]{\fontsize{\fsize}{#1\fsize}\selectfont}%
  \ifx\svgwidth\undefined%
    \setlength{\unitlength}{879.92028809bp}%
    \ifx\svgscale\undefined%
      \relax%
    \else%
      \setlength{\unitlength}{\unitlength * \real{\svgscale}}%
    \fi%
  \else%
    \setlength{\unitlength}{\svgwidth}%
  \fi%
  \global\let\svgwidth\undefined%
  \global\let\svgscale\undefined%
  \makeatother%
  \begin{picture}(1,0.21740016)%
    \lineheight{1}%
    \setlength\tabcolsep{0pt}%
    \footnotesize{
    \put(0,0){\includegraphics[width=\unitlength,page=1]{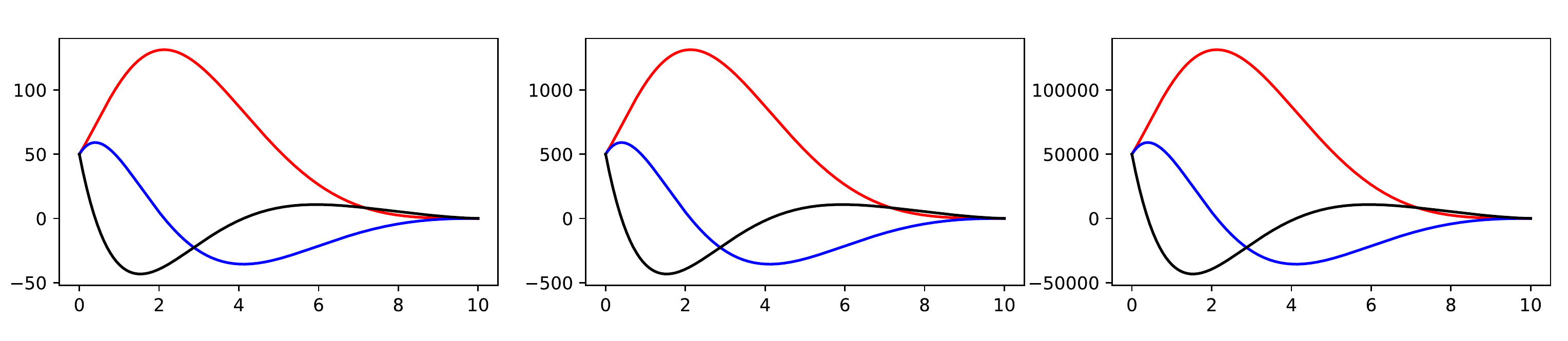}}%
    \put(0.865,0.13){\includegraphics[height=0.044\unitlength,page=1]{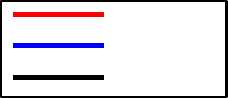}}%
    \put(0.50414292,0.00855746){\color[rgb]{0,0,0}\makebox(0,0)[lt]{\lineheight{1.25}\smash{\begin{tabular}[t]{l}time\end{tabular}}}}%
    \put(0.83996879,0.00855746){\color[rgb]{0,0,0}\makebox(0,0)[lt]{\lineheight{1.25}\smash{\begin{tabular}[t]{l}time\end{tabular}}}}%
    \put(0.16320293,0.00855746){\color[rgb]{0,0,0}\makebox(0,0)[lt]{\lineheight{1.25}\smash{\begin{tabular}[t]{l}time\end{tabular}}}}%
    \put(0.10870009,0.19804402){\color[rgb]{0,0,0}\makebox(0,0)[lt]{\lineheight{1.25}\smash{\begin{tabular}[t]{l}$x_1(0)=x_2(0)=x_3(0)=50$\end{tabular}}}}%
    \put(0.44282128,0.19804402){\color[rgb]{0,0,0}\makebox(0,0)[lt]{\lineheight{1.25}\smash{\begin{tabular}[t]{l}$x_1(0)=x_2(0)=x_3(0)=500$\end{tabular}}}}%
    \put(0.77694248,0.19804402){\color[rgb]{0,0,0}\makebox(0,0)[lt]{\lineheight{1.25}\smash{\begin{tabular}[t]{l}$x_1(0)=x_2(0)=x_3(0)=50000$\end{tabular}}}}%
    \put(0.93,0.163){\color[rgb]{0,0,0}\makebox(0,0)[lt]{\lineheight{1.25}\smash{\begin{tabular}[t]{l}$x_1(t)$\end{tabular}}}}%
    \put(0.93,0.15){\color[rgb]{0,0,0}\makebox(0,0)[lt]{\lineheight{1.25}\smash{\begin{tabular}[t]{l}$x_2(t)$\end{tabular}}}}%
    \put(0.93,0.135){\color[rgb]{0,0,0}\makebox(0,0)[lt]{\lineheight{1.25}\smash{\begin{tabular}[t]{l}$x_3(t)$\end{tabular}}}}%
    }
  \end{picture}%
\endgroup%
    \caption{Simulation of the proposed fixed-time control of Example~\ref{Ex:Linear} with \textit{UBST} chosen as $T_c=10$, which is based on linear control.}
    \label{fig:Linear}
\end{figure}

\end{example}

\begin{remark}
Notice that if system~\eqref{Eq:TauSyst} is such that for every $y_0\neq0$, $\mathcal{T}(y_0)=+\infty$ (such as, in the linear case), then $\eta=1$. Thus, $\lim_{t\to t_0+T_c^-}\kappa(t-t_0)=+\infty$. Moreover, notice that, even if $\lim_{t\to t_0+T_c^{-}}u(t)=0$, to compute the control law, one needs to compute the time-varying gain $\kappa(t-t_0)$. Thus, the controller is not realizable in practice. These are drawbacks also present in the non-autonomous controllers proposed in~\cite{Song2017,Song2018,Pal2020DesignTime}; and as stated in~\cite{Song2017}, also the finite-horizon optimal control approach with a terminal constraint, inevitably yields gains that go to infinity. However, unlike such methods, our methodology allows us to design controllers with bounded time-varying gains and the application to perturbed systems, as we illustrate in our next case.
\end{remark}

The second controller is based on the autonomous control for perturbed second order systems proposed in~\cite{Aldana-Lopez2018}, which provides an estimation of the \textit{UBST}. We show that the over-estimation of the \textit{UBST} is significantly reduced in our proposal, while the time-varying gain remains bounded. Compared with previous fixed-time controllers based on time-varying gains, see e.g., ~\cite{Song2017,Pal2020DesignTime}, our approach allows us to guarantee fixed-time convergence with predefined \textit{UBST} and bounded gains even if the system is affected by disturbances.

\begin{corollary}
Assume that $\delta(t)=0$, for all $t\geq t_0$.
Then, if $\upsilon(y)$ is given as in~\eqref{Eq:Controlw} with
\begin{equation}
\label{Eq:BasinScaled}
w_{L_0}(z)=\rho^n(-k_ng_1(z_1)-k_{n-1}g_2(\rho^{-1}z_2)-\cdots-k_1g_n(\rho^{n-1}z_n)),
\end{equation}
where $s^n+k_1s^{n-1}+\cdots+k_n$ is a Hurwitz polynomia, 
\begin{equation}
g_i(x_i)=\lfloor x_i\rceil^{\frac{n-n\varepsilon_1}{n-(i-1)\varepsilon_1}}+\lfloor x_i\rceil^{\frac{n+n\varepsilon_2}{n+(i-1)\varepsilon_2}}
\end{equation} 
with $\varepsilon_1,\varepsilon_2>0$ sufficiently small; 
then, with $\eta=(1-e^{-\alpha T_{max}})$ where $T_{max}=T_{BBF}/\rho$ with $T_{BBF}$ 
an \textit{UBST} of~\eqref{Eq:Ztau} under the control
\begin{equation}
\label{Eq:Basin}
w_{L_0}(z)=-k_ng_1(z_1)-\cdots-k_1g_n(z_n)),
\end{equation}
which is given, for instance, by equation (10) in \cite{Basin2016ContinuousRegulators}; ~\eqref{Eq:PredefinedSystem} is fixed-time stable with $T_c$ as the predefined \textit{UBST}. Moreover, $\kappa(\mathbf{t})$ is bounded for all $t\in[t_0,t_0+T_c]$. 
\end{corollary}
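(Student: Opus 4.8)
The plan is to reduce the statement to Proposition~\ref{Prop:Design} together with Theorem~\ref{Th:Main} and Corollary~\ref{Cor:BoundedGain}; the only genuinely new work is to show that the $\rho$-scaled controller \eqref{Eq:BasinScaled} renders the chain \eqref{Eq:Ztau} fixed-time stable with the explicit \textit{UBST} $T_{max}=T_{BBF}/\rho$. First I would record the base fact, taken from \cite{Basin2016ContinuousRegulators}: with the unscaled control \eqref{Eq:Basin}, system \eqref{Eq:Ztau} (with $\pi\equiv0$, which holds since $\delta\equiv0$) is fixed-time stable with known \textit{UBST} $T_{BBF}$, for $\varepsilon_1,\varepsilon_2>0$ sufficiently small. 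This is the step in which the bi-homogeneous exponents in $g_i$ and the Hurwitz condition on $s^n+k_1s^{n-1}+\cdots+k_n$ are used.

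The central step is the scaling argument. I would introduce the weighted dilation $\zeta_i=\rho^{-(i-1)}z_i$ together with the time reparametrization $s=\rho\tau$. A direct computation gives $\frac{d\zeta_i}{ds}=\rho^{-i}\frac{dz_i}{d\tau}$, so for $i<n$ one has $\frac{d\zeta_i}{ds}=\rho^{-i}z_{i+1}=\zeta_{i+1}$, while for $i=n$ the Jacobian factor $\frac{d\zeta_n}{ds}=\rho^{-n}\frac{dz_n}{d\tau}$ cancels exactly against the prefactor $\rho^n$ in \eqref{Eq:BasinScaled}, and each argument $\rho^{-(i-1)}z_i$ collapses to $\zeta_i$. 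Hence the $\rho$-scaled closed loop is mapped verbatim onto the unscaled closed loop \eqref{Eq:Ztau}--\eqref{Eq:Basin} in the variables $(\zeta,s)$; note that this requires no homogeneity of the $g_i$, since the controller \eqref{Eq:BasinScaled} is built precisely so that the dilation is an exact symmetry. Because $\zeta=0\iff z=0$ and $s=\rho\tau$, a trajectory of the unscaled system reaching the origin by $s=T_{BBF}$ corresponds to $z$ reaching the origin by $\tau=s/\rho\leq T_{BBF}/\rho$. Therefore the scaled system satisfies $\mathcal{T}_z(z_0)\leq T_{max}:=T_{BBF}/\rho$ for every $z_0\in\mathbb{R}^n$.

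With this in hand the conclusion follows by assembling the earlier results. Proposition~\ref{Prop:Design}, applied with $w_{L_0}$ equal to the scaled control, transfers fixed-time stability with the same \textit{UBST} to the auxiliary system \eqref{Eq:TauSyst}, giving $\mathcal{T}(y_0)=\mathcal{T}_z(Q^{-1}y_0)\leq T_{max}$ for all $y_0$; thus Assumption~\ref{Assump:Auxiliary} holds with $T_f=T_{max}<+\infty$, and its limit condition \eqref{Eq:LimitSingularity} is automatic because \eqref{Eq:TauSyst} is fixed-time stable (as noted after that assumption). Choosing $w_L$ per Assumption~\ref{Assum:AssympChain}, which is trivial since $\delta\equiv0$, Theorem~\ref{Th:Main} with $\eta=1-e^{-\alpha T_{max}}$ then yields that \eqref{Eq:PredefinedSystem} under \eqref{Eq:ProposedControl} is fixed-time stable with predefined \textit{UBST} $T_c$. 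Finally, because $T_f=T_{max}$ is finite, Corollary~\ref{Cor:BoundedGain} gives $\eta<1$ and the explicit bound $\kappa(t-t_0)\leq\frac{\eta}{\alpha T_c(1-\eta)}<+\infty$ on $[t_0,t_0+T_c]$, establishing the boundedness claim.

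The main obstacle is the scaling step: one must verify that the specific placement of the powers of $\rho$ inside the arguments of the $g_i$ and the global factor $\rho^n$ are exactly matched to a single weighted dilation and a time rescaling, so that the dilated system is literally the unscaled controller of \cite{Basin2016ContinuousRegulators} rather than merely a perturbation of it. I would also double-check the exponent on the last argument in \eqref{Eq:BasinScaled}: as written it reads $\rho^{n-1}$, whereas for the dilation to close it should be $\rho^{-(n-1)}$, consistent with the pattern $\rho^{-(i-1)}$ fixed by the $i=2$ term. Everything downstream is a mechanical invocation of Proposition~\ref{Prop:Design}, Theorem~\ref{Th:Main}, and Corollary~\ref{Cor:BoundedGain}.
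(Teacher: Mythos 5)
Your proof is correct and follows essentially the same route as the paper: the paper likewise obtains the scaled \textit{UBST} $T_{max}=T_{BBF}/\rho$ via the time-scaling $\tau=\rho\hat{\tau}$ together with the dilation $\hat{z}_i=\rho^{i-1}z_i$ (the inverse direction of your $\zeta_i=\rho^{-(i-1)}z_i$, $s=\rho\tau$, which is immaterial), and then states that the rest "follows trivially" through Proposition~\ref{Prop:Design}, Theorem~\ref{Th:Main} and Corollary~\ref{Cor:BoundedGain}, which you spell out explicitly. Your flag on the last exponent is well founded: for the dilation to close, the argument must be $\rho^{-(n-1)}z_n$, consistent with the $\rho^{-1}z_2$ term, so the $\rho^{n-1}$ appearing in~\eqref{Eq:BasinScaled} (and in the scaled system displayed in the paper's proof) is a typo rather than a flaw in your argument.
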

\begin{proof}
It follows from Theorem~2 in~\cite{Basin2016ContinuousRegulators} that the system~\eqref{Eq:Ztau} under the control~\eqref{Eq:Basin} is fixed time stable with \textit{UBST} given by $T_{BBF}$. Now consider the time-scaling $\tau=\rho\hat{\tau}$ together with the coordinate change $\hat{z}_i=\rho^{i-1}z_i$. Since $\frac{dz}{d\hat{\tau}}=\frac{dz}{d\tau}\frac{d\tau}{d\hat{\tau}}$ and $\frac{d\tau}{d\hat{\tau}}=\rho$, then in the $\hat{\tau}$ time variable~\eqref{Eq:Ztau} the dynamic for $\hat{z}$ is given by
\begin{align}
    \frac{d\hat{z}_i}{d\tau}&=\hat{z}_{i+1}\\
    \intertext{for $i=1,\ldots,n-1$, and}
    \frac{d\hat{z}_n}{d\tau}&=\rho^n(-k_ng_1(z_1)-k_{n-1}g_2(\rho^{-1}z_2)-\cdots-k_1g_n(\rho^{n-1}z_n)) \label{Eq:ConScaled}
\end{align}
Since, the \textit{UBST} of~\eqref{Eq:Ztau} under the control~\eqref{Eq:Basin} is upper bounded by $T_{BBF}$, then
an \textit{UBST} of~\eqref{Eq:ConScaled} is given by $T_{max}=T_{BBF}/\rho$. Thus, the result follows trivially.
\end{proof}

\begin{example}
\label{Ex:Basin1}
Consider a chain of three integrators. For comparison, consider $u(t)=w_{L_0}(x)$ with $w_{L_0}(\cdot)$ given by~\eqref{Eq:BasinScaled} with $k_1=3$, $k_2=3$, $k_3=1$, $\varepsilon_1=\frac{3}{22}$, $\varepsilon_2=\frac{3}{18}$, $\rho=1$ which according to~\cite{Basin2016ContinuousRegulators}, $T_{BBF}=578.38$. A simulation of such autonomous system with $T_c=12$ is given in the first two rows of Figure~\ref{fig:Basin1}; it can be observed that, even if $T_{max}=578.38$, convergence is obtained before 50 units of time. Thus, the \textit{UBST} is significantly overestimated.

Now, consider the redesign of this autonomous controller using the proposed method with $\upsilon(y)$ as in~\eqref{Eq:Controlw} with $a_1=3$, $a_2=2$, $a_3=0$; $z_1=y_1$, $z_2=y_2$ and $z_3=(y_3-y_2)$, $w_{L_0}(z)$ as in~\eqref{Eq:BasinScaled} 
where $\rho=578.38/15$; $k_1=3$, $k_2=3$, $k_3=1$, $\varepsilon_1=\frac{3}{22}$, $\varepsilon_2=\frac{3}{18}$; and $w_L(x)=-6x_1-11x_2-6x_3$. A simulation of such closed loop system, with $T_c=65$, $\alpha=1$ and $\eta=1-e^{-15}$ as the parameters for $\kappa(\mathbf{t})$, is given in the last two rows of Figure~\ref{fig:Basin1}. Notice that compared with the autonomous control, the overestimation of the \textit{UBST} is significantly reduced. Additionally, in Figure~\ref{fig:BasinEffort} we present a comparison, between the control signals of the autonomous control and the proposed non-autonomoous control.  Notice that the control signal is lower in the proposed nonautonomous control.

\begin{figure}
    \centering
\def\svgwidth{16cm}    
\begingroup%
  \makeatletter%
  \providecommand\color[2][]{%
    \errmessage{(Inkscape) Color is used for the text in Inkscape, but the package 'color.sty' is not loaded}%
    \renewcommand\color[2][]{}%
  }%
  \providecommand\transparent[1]{%
    \errmessage{(Inkscape) Transparency is used (non-zero) for the text in Inkscape, but the package 'transparent.sty' is not loaded}%
    \renewcommand\transparent[1]{}%
  }%
  \providecommand\rotatebox[2]{#2}%
  \newcommand*\fsize{\dimexpr\f@size pt\relax}%
  \newcommand*\lineheight[1]{\fontsize{\fsize}{#1\fsize}\selectfont}%
  \ifx\svgwidth\undefined%
    \setlength{\unitlength}{909.56811523bp}%
    \ifx\svgscale\undefined%
      \relax%
    \else%
      \setlength{\unitlength}{\unitlength * \real{\svgscale}}%
    \fi%
  \else%
    \setlength{\unitlength}{\svgwidth}%
  \fi%
  \global\let\svgwidth\undefined%
  \global\let\svgscale\undefined%
  \makeatother%
  \begin{picture}(1,0.8010753)%
    \lineheight{1}%
    \setlength\tabcolsep{0pt}%
    \put(0,0){\includegraphics[width=\unitlength,page=1]{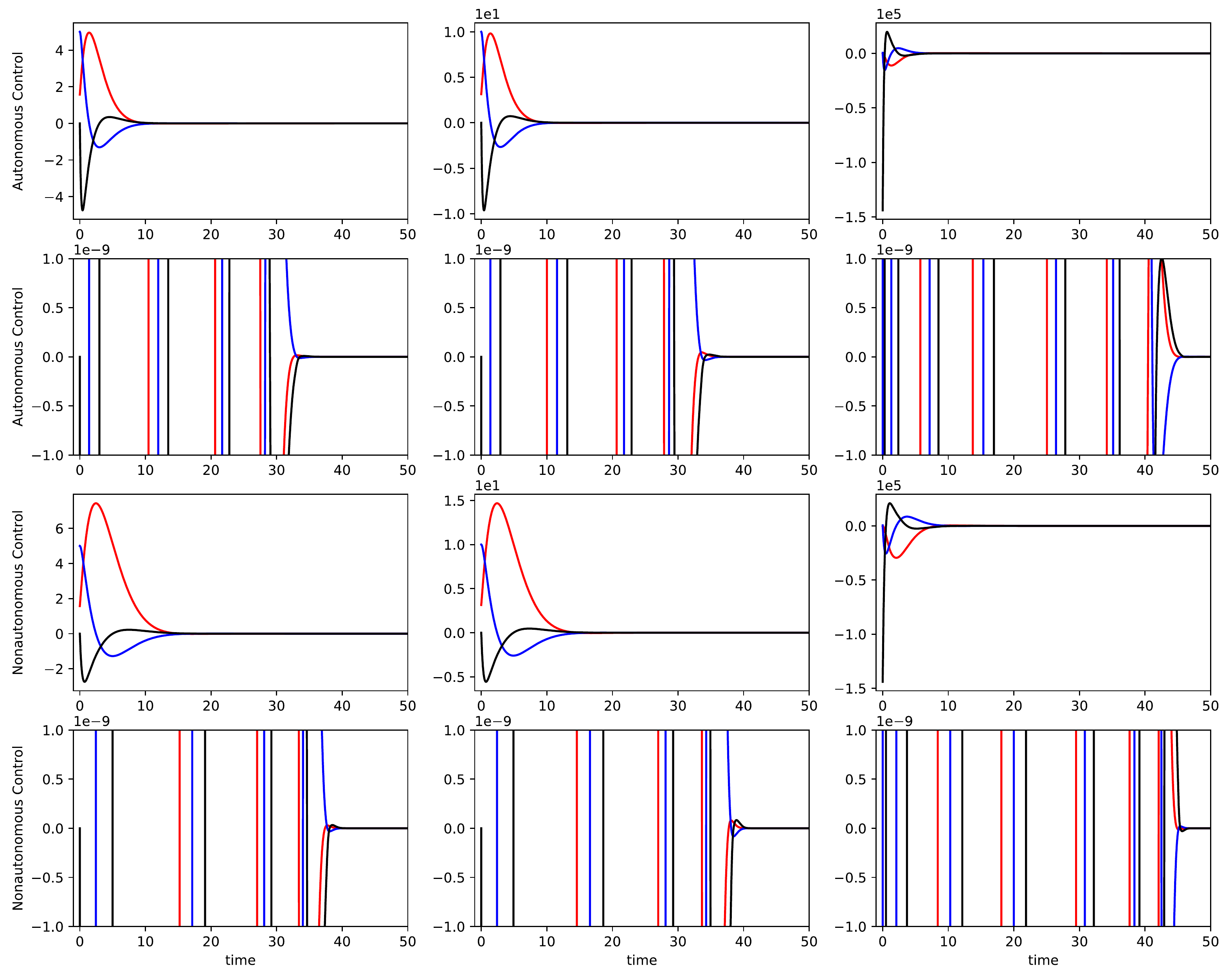}}%
    \put(0.865,0.65){\includegraphics[height=0.05\unitlength,page=1]{Simulations/label.pdf}}%
    \scriptsize{
    \put(0.10,0.785){\color[rgb]{0,0,0}\makebox(0,0)[lt]{\lineheight{1.25}\smash{\begin{tabular}[t]{l}$x_1(0)=\frac{\pi}{2},\ x_2(0)=5,\ x_3(0)=0$\end{tabular}}}}%
    \put(0.42,0.785){\color[rgb]{0,0,0}\makebox(0,0)[lt]{\lineheight{1.25}\smash{\begin{tabular}[t]{l}$x_1(0)=\pi,\ x_2(0)=10,\ x_3=0$\end{tabular}}}}%
    \put(0.75,0.785){\color[rgb]{0,0,0}\makebox(0,0)[lt]{\lineheight{1.25}\smash{\begin{tabular}[t]{l}$x_1(0)=\frac{3\pi}{2},\ x_2(0)=500, x_3=-144e3$\end{tabular}}}}%
    \put(0.925,0.687){\color[rgb]{0,0,0}\makebox(0,0)[lt]{\lineheight{1.25}\smash{\begin{tabular}[t]{l}$x_1(t)$\end{tabular}}}}%
    \put(0.925,0.672){\color[rgb]{0,0,0}\makebox(0,0)[lt]{\lineheight{1.25}\smash{\begin{tabular}[t]{l}$x_2(t)$\end{tabular}}}}%
    \put(0.925,0.657){\color[rgb]{0,0,0}\makebox(0,0)[lt]{\lineheight{1.25}\smash{\begin{tabular}[t]{l}$x_3(t)$\end{tabular}}}}%
    }
  \end{picture}%
\endgroup%
    \caption{Simulation of the proposed fixed-time control of Example~\ref{Ex:Basin1} which is based on the autonomous homogeneous fixed-time control given in~\cite{Basin2016ContinuousRegulators}. Notice that, for the autonomous control the \textit{UBST} is obtained from~\cite{Basin2016ContinuousRegulators} as $578.38$; whereas for the non-autonomous control the desired \textit{UBST} is set at $T_c=65$. Thus, the overestimation is significantly reduced by redesigned the controller with the proposed method. }
    \label{fig:Basin1}
\end{figure}

\begin{figure}
    \centering
\def\svgwidth{16cm}
\begingroup%
  \makeatletter%
  \providecommand\color[2][]{%
    \errmessage{(Inkscape) Color is used for the text in Inkscape, but the package 'color.sty' is not loaded}%
    \renewcommand\color[2][]{}%
  }%
  \providecommand\transparent[1]{%
    \errmessage{(Inkscape) Transparency is used (non-zero) for the text in Inkscape, but the package 'transparent.sty' is not loaded}%
    \renewcommand\transparent[1]{}%
  }%
  \providecommand\rotatebox[2]{#2}%
  \newcommand*\fsize{\dimexpr\f@size pt\relax}%
  \newcommand*\lineheight[1]{\fontsize{\fsize}{#1\fsize}\selectfont}%
  \ifx\svgwidth\undefined%
    \setlength{\unitlength}{898.56433105bp}%
    \ifx\svgscale\undefined%
      \relax%
    \else%
      \setlength{\unitlength}{\unitlength * \real{\svgscale}}%
    \fi%
  \else%
    \setlength{\unitlength}{\svgwidth}%
  \fi%
  \global\let\svgwidth\undefined%
  \global\let\svgscale\undefined%
  \makeatother%
  \begin{picture}(1,0.40774913)%
    \lineheight{1}%
    \setlength\tabcolsep{0pt}%
    \put(0,0){\includegraphics[width=\unitlength,page=1]{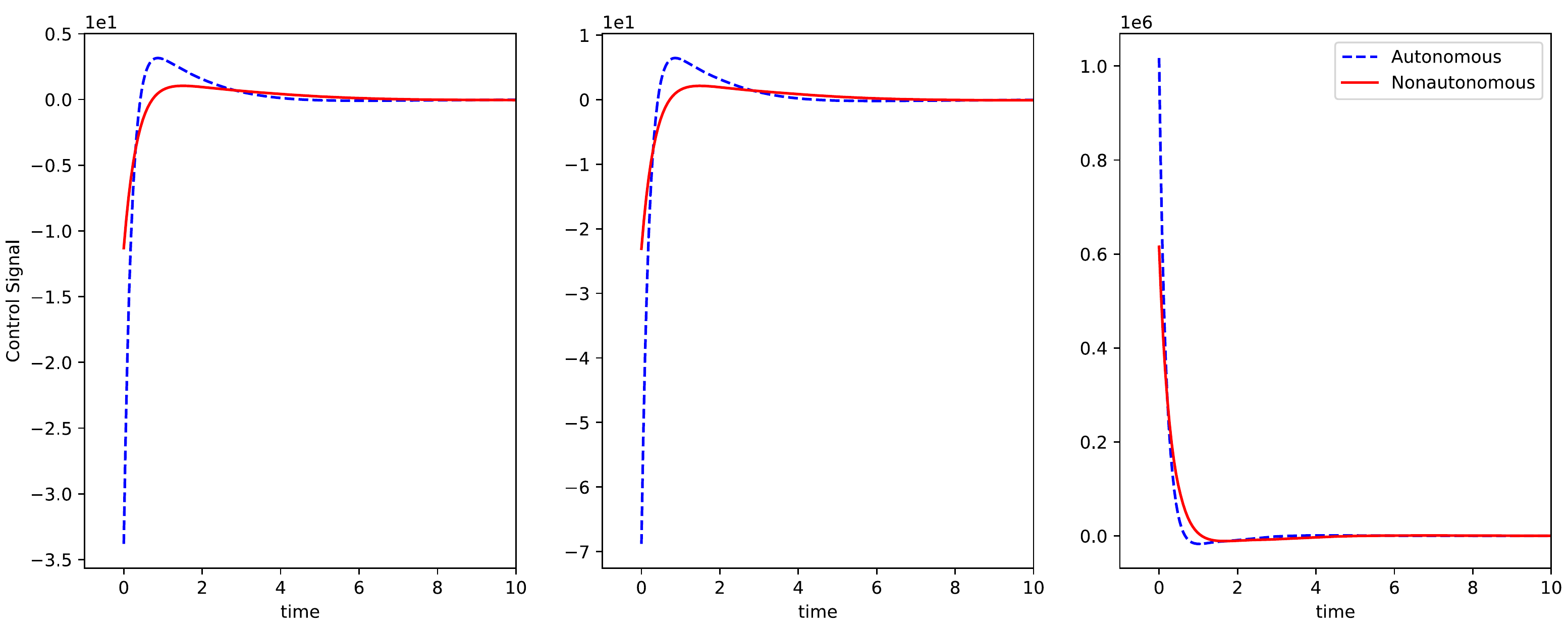}}%
    \scriptsize{
    \put(0.111,0.39){\color[rgb]{0,0,0}\makebox(0,0)[lt]{\lineheight{1.25}\smash{\begin{tabular}[t]{l}$x_1(0)=\frac{\pi}{2},\ x_2(0)=5,\ x_3(0)=0$\end{tabular}}}}%
    \put(0.43,0.39){\color[rgb]{0,0,0}\makebox(0,0)[lt]{\lineheight{1.25}\smash{\begin{tabular}[t]{l}$x_1(0)=\pi,\ x_2(0)=10,\ x_3=0$\end{tabular}}}}%
    \put(0.74,0.39){\color[rgb]{0,0,0}\makebox(0,0)[lt]{\lineheight{1.25}\smash{\begin{tabular}[t]{l}$x_1(0)=\frac{3\pi}{2},\ x_2(0)=500, x_3=-144e3$\end{tabular}}}}%
    \put(0.2,0.34){\color[rgb]{0,0,0}\makebox(0,0)[lt]{\lineheight{1.25}\smash{\begin{tabular}[t]{l}$u(t)$\end{tabular}}}}%
    \put(0.6,0.34){\color[rgb]{0,0,0}\makebox(0,0)[lt]{\lineheight{1.25}\smash{\begin{tabular}[t]{l}$u(t)$\end{tabular}}}}%
    \put(0.85,0.07){\color[rgb]{0,0,0}\makebox(0,0)[lt]{\lineheight{1.25}\smash{\begin{tabular}[t]{l}$u(t)$\end{tabular}}}}%
    }
  \end{picture}%
\endgroup%
    \caption{Control signal for Example~\ref{Ex:Basin1}. Notice that, although the real convergence time are similar in the autonomous and nonautonomous case, the control magnitud is lower in the non-autonomous case. Also, notice that the autonomous control, which is obtained from~\cite{Basin2016ContinuousRegulators}, has an \textit{UBST} of $578.38$; whereas for the non-autonomous control the desired \textit{UBST} is set at $T_c=65$.}
    \label{fig:BasinEffort}
\end{figure}
\end{example}

\begin{corollary}
Let $\delta(t)$ be such that $|\delta(t)|\leq L$ holds for a known constant $L$ and let $\alpha_1,\alpha_2,\beta_1,\beta_2,p,q,k>0$, $kp<1$, $kq>1$, $T_{c_1},T_{c_2}>0$, $\zeta(\tau)\geq (\alpha\eta^{-1}T_c e^{-\alpha\tau})^2L$, and \[\gamma_1=\frac{\Gamma \left(\frac{1}{4}\right)^2 }{2\alpha_1^{1/2}\Gamma\left(\frac{1}{2}\right)}\left(\frac{\alpha_1}{\beta_1}\right)^{1/4},\text{ and } \gamma_2=\frac{\Gamma \left(m_{p}\right) \Gamma \left(m_{q}\right)}{\alpha_2^{k}\Gamma (k) (q-p)}\left(\frac{\alpha_2}{\beta_2}\right)^{m_{p}},\] with $m_{p}=\frac{1-kp}{q-p}$ and $m_{q}=\frac{kq-1}{q-p}$. Then, if $\upsilon(y)$ is given as in~\eqref{Eq:Controlw} with
\begin{equation}
\label{Eq:Polyakov}
w_{L_0}(z)=-\left[\frac{\gamma_2}{T_{c_2}}\left(\alpha_2\abs{\sigma}^{p}+\beta_2\abs{\sigma}^{q}\right)^{k}+\frac{\gamma_1^2}{2T_{c_1}^2}\left(\alpha_1+3\beta_1z_1^2\right)+\zeta(\tau)\right]\sign{\sigma},
\end{equation}
where $\sigma$ is defined as
\begin{equation}\label{eq:sigmaso}
\sigma=z_2+\barpow{\barpow{z_2}^2+\frac{2\gamma_1^2}{T_{c_1}^2}\left(\alpha_1\barpow{z_1}^1+\beta_1\barpow{z_1}^3\right)}^{1/2},
\end{equation}
and $\eta$ is selected as $\eta=(1-e^{-\alpha (T_{c_1}+T_{c_2}))})$, then~\eqref{Eq:PredefinedSystem} is fixed-time stable with $T_c$ as the predefined \textit{UBST}. Moreover, $\kappa(\mathbf{t})$ is bounded for all $t\in[t_0,t_0+T_c]$. 
\end{corollary}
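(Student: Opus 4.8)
The plan is to reduce the statement to Proposition~\ref{Prop:Design} and Corollary~\ref{Cor:BoundedGain}, exactly as in the two preceding redesign cases. Here $n=2$, since both the sliding variable~\eqref{eq:sigmaso} and the feedback~\eqref{Eq:Polyakov} depend only on $z_1,z_2$, so the only genuinely new work is to certify that the auxiliary (perturbed) double integrator~\eqref{Eq:Ztau} driven by $w_{L_0}(z)$ is \emph{robustly} predefined-time stable with the \emph{known} \textit{UBST} $T_{c_1}+T_{c_2}$, despite the transformed disturbance $\pi(\tau)$. Once this holds, take $T_f=T_{c_1}+T_{c_2}<+\infty$; then $\eta=(1-e^{-\alpha(T_{c_1}+T_{c_2})})<1$, Proposition~\ref{Prop:Design} transfers this \textit{UBST} to~\eqref{Eq:TauSyst}, Theorem~\ref{Th:Main} gives fixed-time stability of~\eqref{Eq:PredefinedSystem} with predefined \textit{UBST} $T_c$, and Corollary~\ref{Cor:BoundedGain} (since $\eta<1$) gives boundedness of $\kappa(t-t_0)$ on $[t_0,t_0+T_c]$.

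For the robust predefined-time stability of~\eqref{Eq:Ztau} I would use the two-phase (reaching/sliding) argument of the terminal sliding-mode controller of Aldana-L\'opez et al.~\cite{Aldana-Lopez2018}. In the reaching phase, take $V_{\sigma}=\abs{\sigma}$ with $\sigma$ as in~\eqref{eq:sigmaso} and differentiate along~\eqref{Eq:Ztau}; the $z_1$-dependent drift in $\dot\sigma$ is cancelled by the correction $\tfrac{\gamma_1^2}{2T_{c_1}^2}(\alpha_1+3\beta_1 z_1^2)\sign{\sigma}$ built into~\eqref{Eq:Polyakov}. Using $\zeta(\tau)\ge(\alpha\eta^{-1}T_c e^{-\alpha\tau})^2 L\ge\abs{\pi(\tau)}$ (the $n=2$ disturbance bound obtained in the proof of Theorem~\ref{Th:Main}), the disturbance is dominated and one is left with $\dot V_{\sigma}\le-\tfrac{\gamma_2}{T_{c_2}}(\alpha_2 V_{\sigma}^{p}+\beta_2 V_{\sigma}^{q})^{k}$. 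Separating variables, the reaching time is at most $\tfrac{T_{c_2}}{\gamma_2}\int_0^{\infty}(\alpha_2 s^{p}+\beta_2 s^{q})^{-k}\,ds$; the constraints $kp<1$ and $kq>1$ ensure this integral converges at both $0$ and $\infty$, and the Beta/Gamma value of $\gamma_2$ equals precisely this integral, so $\sigma$ reaches zero in time at most $T_{c_2}$.

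In the sliding phase, setting $\sigma\equiv0$ in~\eqref{eq:sigmaso} and solving yields $z_2=-\tfrac{\gamma_1}{T_{c_1}}\sign{z_1}\sqrt{\alpha_1\abs{z_1}+\beta_1\abs{z_1}^3}$, so the reduced scalar dynamics are $\dot z_1=-\tfrac{\gamma_1}{T_{c_1}}\sign{z_1}\sqrt{\alpha_1\abs{z_1}+\beta_1\abs{z_1}^3}$. Integrating this separable equation gives a settling time of at most $\tfrac{T_{c_1}}{\gamma_1}\int_0^{\infty}(\alpha_1 r+\beta_1 r^{3})^{-1/2}\,dr$, and the Gamma value of $\gamma_1$ (with $\Gamma(1/2)=\sqrt\pi$) equals exactly this integral, so this phase lasts at most $T_{c_1}$. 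Summing the two phases gives $\mathcal{T}_z(z_0)\le T_{c_1}+T_{c_2}$ for every $z_0\in\mathbb{R}^2$, the required known \textit{UBST}.

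The hard part is the reaching-phase accounting: one must verify that the correction term in~\eqref{Eq:Polyakov} cancels the $z_1$-induced drift exactly (rather than up to a sign-indefinite remainder), that $\zeta(\tau)$ dominates $\pi(\tau)$ uniformly in $\tau$ through the bound $\abs{\pi(\tau)}\le(\alpha\eta^{-1}T_c e^{-\alpha\tau})^2 L$, and that $\gamma_1,\gamma_2$ evaluate the two separated integrals \emph{exactly}, so that the phase bounds are the sharp values $T_{c_1}$ and $T_{c_2}$ and not merely finite quantities. Everything after that---the time-scaling back to~\eqref{Eq:TauSyst}, the conclusion $\eta<1$, the bounded gain, and the predefined \textit{UBST} $T_c$---follows mechanically from Proposition~\ref{Prop:Design}, Theorem~\ref{Th:Main}, and Corollary~\ref{Cor:BoundedGain}.
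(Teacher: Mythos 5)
Your proposal is correct and follows the same overall reduction as the paper: certify that the auxiliary system~\eqref{Eq:Ztau} under~\eqref{Eq:Polyakov} is robustly fixed-time stable with the known \textit{UBST} $T_f=T_{c_1}+T_{c_2}$, set $\eta=1-e^{-\alpha T_f}<1$, and let Proposition~\ref{Prop:Design}, Theorem~\ref{Th:Main} and Corollary~\ref{Cor:BoundedGain} deliver the predefined \textit{UBST} $T_c$ and the boundedness of $\kappa(t-t_0)$. The only divergence is in how that key ingredient is obtained: the paper's entire proof is a citation --- it invokes Theorem~4 of~\cite{Aldana-Lopez2018} for the fixed-time stability of~\eqref{Eq:Ztau} with \textit{UBST} $T_{c_1}+T_{c_2}$ and declares the rest trivial --- whereas you re-derive that theorem via the reaching/sliding two-phase analysis, including the verification that $\gamma_1$ and $\gamma_2$ are exactly the Beta-function values of the separated integrals $\int_0^\infty(\alpha_1 r+\beta_1 r^3)^{-1/2}\,dr$ and $\int_0^\infty(\alpha_2 s^{p}+\beta_2 s^{q})^{-k}\,ds$ (your evaluations check out, via $B(1/4,1/4)=\Gamma(1/4)^2/\Gamma(1/2)$ and $B(m_p,m_q)$ with $m_p+m_q=k$, and the conditions $kp<1$, $kq>1$ are precisely what make the second integral converge at $0$ and $\infty$). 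Your route buys self-containedness and makes explicit where the hypothesis $\zeta(\tau)\geq(\alpha\eta^{-1}T_ce^{-\alpha\tau})^2L\geq\abs{\pi(\tau)}$ (the $n=2$ instance of the disturbance bound from the proof of Theorem~\ref{Th:Main}) is used to dominate the transformed disturbance; the paper's route is shorter but delegates all of that, including the exact cancellation of the $z_1$-drift in $\dot\sigma$, to the cited reference. Either is acceptable.
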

\begin{proof}
It follows from Theorem~4 in~\cite{Aldana-Lopez2018}, that the system~\eqref{Eq:Ztau} under the control~\eqref{Eq:Polyakov} is fixed time stable with an \textit{UBST} given by $T_{max}=T_{c_1}+T_{c_2}$. Thus, the proof follows trivially.
\end{proof}

\begin{example}
\label{Ex:Polyakov}
Consider a chain of two integrators with disturbance $\delta(t) = \sin(2\pi t /5)$. For comparison, consider $u(t)=w_{L_0}(x)$ with $w_{L_0}(\cdot)$ given by~\eqref{Eq:Polyakov} where $\zeta=(\alpha\eta^{-1}T_c e^{-\alpha\tau})^2$, $p=0.5$, $q=3$, $k=1.5$ and $\alpha_1=\alpha_2=1/\beta_1=1/\beta_2=4$. It follows from~\cite{Aldana-Lopez2018} that with $T_{c_1} = T_{c_2} = 5$, $T_{max}=10$ is an \textit{UBST}. The simulation under such autonomous control is given in the first two rows of Figure~\ref{fig:Polyakov}.

Now, consider the system under the proposed fixed-time control~\eqref{Eq:BasinScaled} with $\upsilon(y)$ as in~\eqref{Eq:Controlw} with $a_1=1$, $a_2=0$; $z_1=y_1$, $z_2=y_2$, $w_{L_0}(z)$ as in~\eqref{Eq:Polyakov} where $\zeta=1$, $p=0.5$, $q=3$, $k=1.5$ and $\alpha_1=\alpha_2=1/\beta_1=1/\beta_2=4$. It follows from~\cite{Aldana-Lopez2018} that with $T_{c_1} = T_{c_2} = 5$. A simulation of such closed loop system, with $T_c=10$, $\alpha=1$ and $\eta=1-e^{-10}$ as the parameters for $\kappa(\mathbf{t})$, is given in the last two rows of Figure~\ref{fig:Polyakov}. Notice that compared with the autonomous control, the overestimation of the \textit{UBST} is significantly reduced.

\begin{figure}
    \centering
\def\svgwidth{16cm}    
\begingroup%
  \makeatletter%
  \providecommand\color[2][]{%
    \errmessage{(Inkscape) Color is used for the text in Inkscape, but the package 'color.sty' is not loaded}%
    \renewcommand\color[2][]{}%
  }%
  \providecommand\transparent[1]{%
    \errmessage{(Inkscape) Transparency is used (non-zero) for the text in Inkscape, but the package 'transparent.sty' is not loaded}%
    \renewcommand\transparent[1]{}%
  }%
  \providecommand\rotatebox[2]{#2}%
  \newcommand*\fsize{\dimexpr\f@size pt\relax}%
  \newcommand*\lineheight[1]{\fontsize{\fsize}{#1\fsize}\selectfont}%
  \ifx\svgwidth\undefined%
    \setlength{\unitlength}{909.56811523bp}%
    \ifx\svgscale\undefined%
      \relax%
    \else%
      \setlength{\unitlength}{\unitlength * \real{\svgscale}}%
    \fi%
  \else%
    \setlength{\unitlength}{\svgwidth}%
  \fi%
  \global\let\svgwidth\undefined%
  \global\let\svgscale\undefined%
  \makeatother%
  \begin{picture}(1,0.67192635)%
    \lineheight{1}%
    \setlength\tabcolsep{0pt}%
    \put(0,0){\includegraphics[width=\unitlength,page=1]{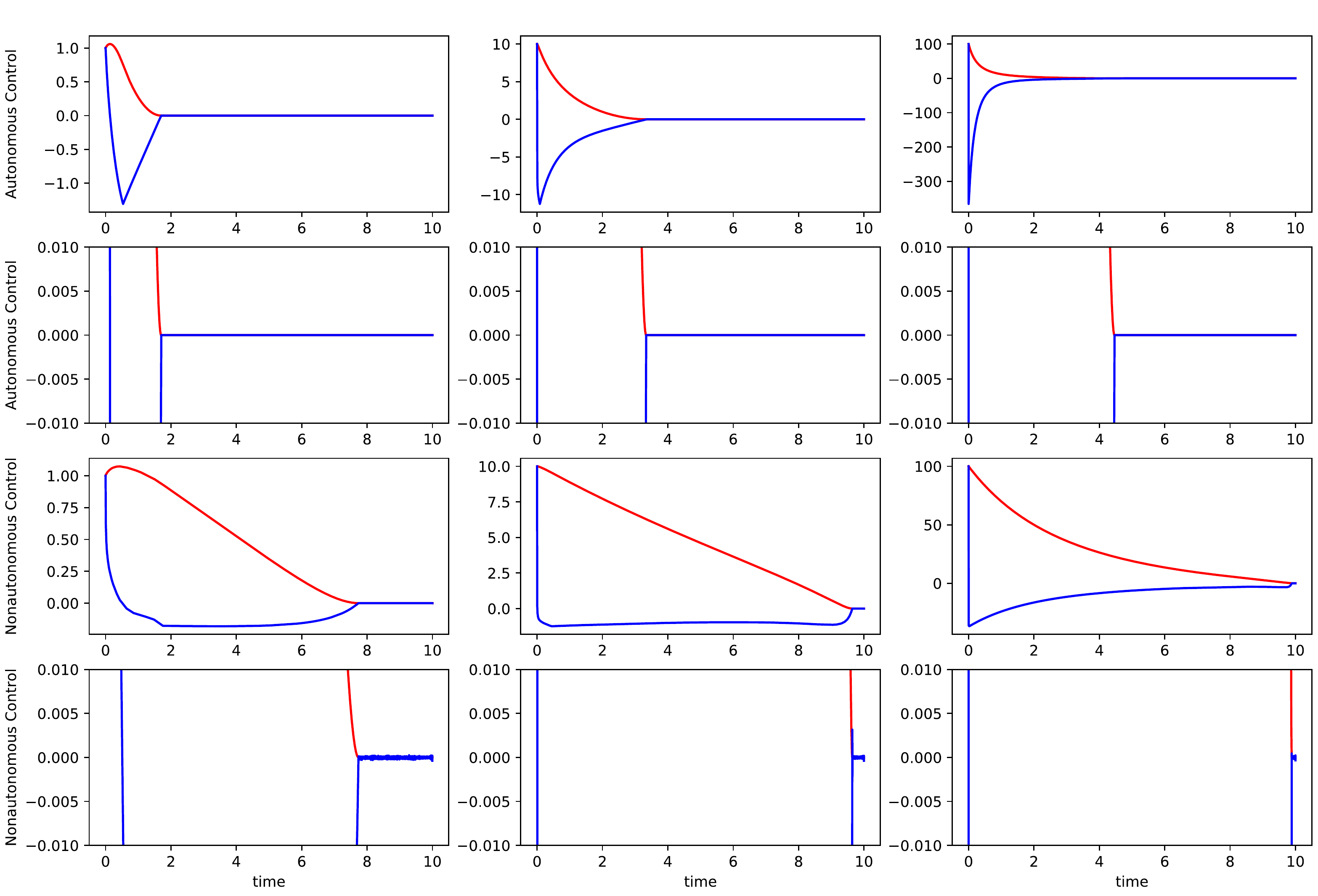}}%
    \put(0.865,0.533){\includegraphics[height=0.031\unitlength,page=1]{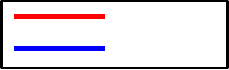}}%
    \footnotesize{
    \put(0.16474359,0.65746217){\color[rgb]{0,0,0}\makebox(0,0)[lt]{\lineheight{1.25}\smash{\begin{tabular}[t]{l}$x_1(0)=x_2(0)=1$\end{tabular}}}}%
    \put(0.49621959,0.65746217){\color[rgb]{0,0,0}\makebox(0,0)[lt]{\lineheight{1.25}\smash{\begin{tabular}[t]{l}$x_1(0)=x_2(0)=10$\end{tabular}}}}%
    \put(0.80955511,0.65746217){\color[rgb]{0,0,0}\makebox(0,0)[lt]{\lineheight{1.25}\smash{\begin{tabular}[t]{l}$x_1(0)=x_2(0)=100$\end{tabular}}}}%
    \put(0.93,0.553){\color[rgb]{0,0,0}\makebox(0,0)[lt]{\lineheight{1.25}\smash{\begin{tabular}[t]{l}$x_1(t)$\end{tabular}}}}%
    \put(0.93,0.54){\color[rgb]{0,0,0}\makebox(0,0)[lt]{\lineheight{1.25}\smash{\begin{tabular}[t]{l}$x_2(t)$\end{tabular}}}}%
    }
  \end{picture}%
\endgroup%
    \caption{Simulation of the proposed fixed-time control of Example~\ref{Ex:Polyakov}, which is based on the second order control given in~\cite{Aldana-Lopez2018}.}
    \label{fig:Polyakov}
\end{figure}
\end{example}

\section{Conclusion}
\label{Sec:Conclusions}
This manuscript presented new controllers based on time-varying gains, constructed from time-base generators, for the stabilization of a chain of integrators in a fixed-time, with predefined \textit{UBST}, which allows the application of the results to scenarios with real-time constraints.

We showed how, based on an autonomous fixed-time controller (which typically has a very conservative \textit{UBST}, see e.g.~\cite{Polyakov2012a,Basin2016ContinuousRegulators,Zimenko2018,Aldana-Lopez2018,Mishra2018}), to derive a non-autonomous fixed-time controller with a predefined \textit{UBST}, resulting in a \textit{UBST} that can be set arbitrarily tight. Additionally, we provide conditions under which the time-varying gain is guaranteed to be bounded, which is a significant contribution against existing controllers based on time-varying gains\cite{Song2017,Song2018,Pal2020DesignTime}. In contrast to existing fixed-time controllers based on time-varying gains\cite{Song2017,Becerra2018,Pal2020DesignTime}, our design is more straightforward and guarantees predefined convergence even in the presence of external disturbances affecting the system. Moreover, unlike~\cite{Becerra2018}, our approach does not require explicit use of the initial conditions in the feedback law. 

We presented numerical simulations and comparisons with existing autonomous fixed-time controllers to demonstrate our contribution. Future work is concerned with the extension of these results to consider a broader class of time-varying gains that have been proposed in the literature~\cite{aldana2019design}.

This work, together with predefined-time observers and online differentiation algorithms~\cite{Aldana-Lopez2020ATime}, contribute toward control algorithms for systems under real-time constraints.

\section*{Acknowledgement}
The author would like to thank Rodrigo Aldana López for the fruitful discussion on fixed-time systems, Nilesh Ahuja for proofreading the manuscript and M. Basin for his comments on autonomous fixed-time systems.


\end{document}